\newtheorem{theorem}{Theorem}
\newtheorem{lemma}[theorem]{Lemma}
\newtheorem{claim}{Claim}
\title{\Large Approximating Connected Safe Sets in Weighted Trees}
\author{Stefan Ehard \and Dieter Rautenbach}
\date{}
\begin{document}

\maketitle

\begin{center}
Institut f\"{u}r Optimierung und Operations Research,\\ 
Universit\"{a}t Ulm, 89081 Ulm, Germany,\\
\{\texttt{stefan.ehard, dieter.rautenbach}\}\texttt{@uni-ulm.de}\\[3mm]
\end{center}

\begin{abstract}
For a graph $G$ and a non-negative integral weight function $w$ on the vertex set of $G$, a set $S$ of vertices of $G$ is $w$-safe if $w(C)\geq w(D)$ for every component $C$ of the subgraph of $G$ induced by $S$ and every component $D$ of the subgraph of $G$ induced by the complement of $S$ such that some vertex in $C$ is adjacent to some vertex of $D$. The minimum weight $w(S)$ of a $w$-safe set $S$ is the safe number $s(G,w)$ of the weighted graph $(G,w)$, and the minimum weight of a $w$-safe set that induces a connected subgraph of $G$ is its connected safe number $cs(G,w)$.
Bapat et al.~showed that computing $cs(G,w)$ is NP-hard even when $G$ is a star.
For a given weighted tree $(T,w)$, they described an efficient $2$-approximation algorithm for $cs(T,w)$ as well as an efficient $4$-approximation algorithm for $s(T,w)$.
Addressing a problem they posed, we present a PTAS for the connected safe number of a weighted tree. Our PTAS partly relies on an exact pseudopolynomial time algorithm, which also allows to derive an asymptotic FPTAS for restricted instances. 
Finally, we extend a bound due to Fujita et al.~from trees to block graphs.\\[5mm]
{\bf Keywords:} Safe set; connected safe set; block graph
\end{abstract}

\pagebreak

\section{Introduction}

In \cite{fumasa} Fujita, MacGillivray, and Sakuma define a {\it safe set} in a graph $G$ of order $n(G)$
as a set $S$ of vertices of $G$ with the property that 
$n(C)\geq n(D)$ for 
every component $C$ of the subgraph $G[S]$ of $G$ induced by $S$
and
every component $D$ of the subgraph $G-S$ of $G$ induced by $V(G)\setminus S$
such that some vertex in $C$ is adjacent to some vertex in $D$.
This notion can be motivated by different applications such as facility location \cite{fumasa}
or network majority \cite{bafulemamasatu}.
The smallest cardinality of a safe set in $G$ is the {\it safe number} $s(G)$ of $G$,
and the smallest cardinality of a safe set $S$ in $G$ that induces a connected subgraph in $G$ 
is the {\it connected safe number} $cs(G)$ of $G$.
For convenience, we call a set $S$ of vertices of $G$ {\it connected} if $G[S]$ is connected,
and we call two disjoint subgraphs $C$ and $D$ of $G$ 
{\it adjacent} if some vertex in $C$ is adjacent to some vertex in $D$.

Fujita et al.~\cite{fumasa} show 
\begin{eqnarray}
cs(G) & \leq & \min\left\{ \left\lceil \frac{n(G)}{2}\right\rceil, 2s(G)-1\right\}\mbox{ for every graph $G$, and}\label{e1}\\
cs(T) &\leq &\left\lceil \frac{n(T)}{3}\right\rceil\mbox{ for every tree $T$}.\label{e2}
\end{eqnarray}
They establish the hardness of the two parameters in general,
and describe a linear time algorithm computing a connected safe set of minimum order $cs(T)$ for a given tree $T$.
\'{A}gueda et al.~\cite{agcofulemamamonaotsatuxu} present an efficient algorithm based on dynamic programming
computing a safe set of minimum order $s(T)$ for a given tree $T$, 
and they also show the tractability of the safe number for graphs of bounded treewidth and interval graphs.

In \cite{bafulemamasatu,bafulemamasatu2} Bapat et al.~consider a weighted version of (connected) safe sets.
For a graph $G$, 
a weight function $w:V(G)\to \mathbb{Z}_{\geq 0}$, 
a set $U$ of vertices of $G$,
and a subgraph $H$ of $G$, let 
the {\it $w$-weight $w(U)$ of $U$} be $\sum\limits_{u\in U}w(u)$,
and let the {\it $w$-weight $w(H)$ of $H$} be $w(V(H))$.
A set $S$ of vertices of $G$ is a {\it $w$-safe set} in $G$ if
$w(C)\geq w(D)$ for 
every component $C$ of $G[S]$ 
and
every component $D$ of $G-S$
such that $D$ is adjacent to $C$.
The safe number $s(G,w)$ of the weighted graph $(G,w)$ is the minimum $w$-weight $w(S)$ of a $w$-safe set $S$ in $G$, and
the connected safe number $cs(G,w)$ of $(G,w)$ is the minimum $w$-weight $w(S)$ of a connected $w$-safe set $S$ in $G$.
Adapting an argument from \cite{fumasa},
it is not difficult to show $cs(G,w)\leq 2s(G,w)$.

In view of the efficient algorithms for trees mentioned above,
Bapat et al.~\cite{bafulemamasatu,bafulemamasatu2} show the surprising result that computing the safe number and the connected safe number are NP-hard problems already for weighted stars.
On the positive side, 
they describe an efficient algorithm computing the safe number of a weighted path,
an efficient $2$-approximation algorithm for the connected safe number of a weighted tree,
and  
an efficient $4$-approximation algorithm for the safe number of a weighted tree.
In the third problem mentioned in the conclusion of \cite{bafulemamasatu2},
they ask for better approximation algorithms for trees, and, 
in particular, for polynomial-time approximation schemes (PTAS).

As our first main contribution, we provide one such PTAS for the connected safe number of a weighted tree.

\begin{theorem}\label{ptas2}
Let $\epsilon$ be a positive real.

For a given tree $T$ of order $n$, and a given function $w:V(T)\to \mathbb{Z}_{\geq 0}$,
a connected $w$-safe set $S$ with $w(S)\leq (1+\epsilon)cs(T,w)$ 
can be determined in time $O\left(\frac{1}{\epsilon^4}n^{O\left(\frac{1}{\epsilon}\right)}\right)$.
\end{theorem}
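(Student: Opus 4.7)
The plan is to combine the known $2$-approximation, a geometric search over the optimum, and a tree dynamic program with rounded state. First, I would invoke the $2$-approximation of Bapat et al.~to obtain $W_0$ with $W^* := cs(T,w) \le W_0 \le 2W^*$. Then I would enumerate $W$ in a geometric sequence $W_0/2, (1+\epsilon)W_0/2, (1+\epsilon)^2 W_0/2, \ldots, W_0$, yielding $O(1/\epsilon)$ candidates; for the smallest $W \ge W^*$ in this list, $W \le (1+\epsilon)W^*$, so it suffices to find a safe set $S$ with $w(S) \le (1+\epsilon)W$.

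Next, for each $W$, I would guess the top vertex $v^*$ of $S$ (the vertex of $S$ closest to an arbitrary root; $n$ choices) and re-root $T$ at $v^*$. Since $S$ is a subtree of $T$ through $v^*$, whenever a vertex $v \in S$ has a child $c \notin S$, the whole subtree $T_c$ is a component of $T - S$, and the safety condition reduces to $w(T_c) \le w(S)$ for every such edge. A natural bottom-up DP then maintains, at each vertex $v$, the Pareto frontier of achievable pairs $(w(S \cap T_v), M_v)$, where $M_v$ is the maximum $w(T_c)$ over excluded children $c$ processed so far in $T_v$. Combining children's frontiers provides an exact pseudopolynomial algorithm.

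To lift this to a PTAS, I would discretize the DP state. Both $w(S \cap T_v)$ and $M_v$ would be rounded to multiples of $\delta = \epsilon W/n$, introducing additive error $O(\epsilon W)$ that is absorbed into the $(1+\epsilon)$ factor. To reach the stated running time $n^{O(1/\epsilon)}$, one additionally identifies the bounded-size ``heavy skeleton'' of $S$, namely the at most $1/\epsilon$ vertices of $S$ with $w(v) \ge \epsilon W$, and enumerates them explicitly: there are at most $\binom{n}{\le 1/\epsilon} = n^{O(1/\epsilon)}$ such skeletons, and for each one the remaining DP over the light-weight vertices runs in time polynomial in $n/\epsilon$.

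The main obstacle is the safety condition, which is a comparison between two quantities rather than a linear constraint: rounding in opposite directions on the two coordinates can either destroy safety (if $M_v$ is underestimated and $w(S)$ overestimated) or reject near-optimal solutions (if the margins go the other way). I would address this by rounding $w(S \cap T_v)$ \emph{downward} and $M_v$ \emph{upward}, enforcing the comparison with an $O(\epsilon W)$ safety margin, and then verifying post hoc that the returned $S$ is safe for the original weights with $w(S) \le (1+O(\epsilon))W^*$; rescaling $\epsilon$ by a constant factor yields the theorem.
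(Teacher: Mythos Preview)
Your plan contains the right raw ingredients (the $2$-approximation to localize $W^*$, rooting at a guessed top vertex, and enumerating a heavy skeleton of size $O(1/\epsilon)$), and you put your finger on exactly the right obstacle: safety is a \emph{comparison} $\max_D w(D)\le w(S)$, not a linear constraint, so Ibarra--Kim-style rounding can destroy it. But your proposed fix does not close the gap. Rounding $w(S)$ down and $M$ up and then requiring an $O(\epsilon W)$ margin means your DP will only certify sets that are safe \emph{with slack}. Nothing guarantees that any near-optimal set has such slack: if the optimum $S^*$ satisfies $\max_D w(D)=w(S^*)$ exactly, your margin-enforcing DP may reject $S^*$ and every set within $(1+O(\epsilon))W^*$, and ``post-hoc verification'' then has nothing to verify. (Indeed, the paper carries out precisely your rounding idea in its Theorem~\ref{theorem3} and only obtains an \emph{asymptotic} guarantee with an unavoidable additive $w_{\max}$ term; it explicitly remarks that the Ibarra--Kim argument does not carry over.)

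The paper's proof supplies the missing idea: a case split on whether slack is attainable. Let $\partial W\approx\epsilon W^*$ and let $L$ be the vertices of weight $>\partial W$. In the ``slack'' case, some connected $S$ has $W^*+\partial W\le w(S)\le W^*+2\partial W$ and every component of $T-S$ has weight $\le W^*$; then for an arithmetic grid $W_i=(i-1)\partial W$ one shows (Lemma~\ref{ptas1}) that such a set exists iff a suitable $L'\subseteq L$ of size $<(W+\partial W)/\partial W=O(1/\epsilon)$ exists, and once $L'$ is guessed the rest is a \emph{greedy} fill with light vertices, not a rounded DP --- the point being that adding light vertices changes $w(S)$ by at most $\partial W$ per step, so you can hit the window exactly. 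In the ``no slack'' case, one argues structurally: if $S^*$ is optimal and $L^*=S^*\cap L$, then the component $T^*$ of $T-(L\setminus L^*)$ containing $S^*$ must itself have $w(T^*)<W^*+\partial W$ (otherwise greedily growing $S^*$ inside $T^*$ would produce slack), hence $V(T^*)$ is a connected safe set of weight $\le(1+\epsilon)W^*$; since $T^*$ is determined by $L^*$ with $|L^*|\le 4/\epsilon$, enumeration finds it. This dichotomy --- and especially the Case~1 observation that the whole light component is already a good safe set --- is what your outline is missing.
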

For instances $(T,w)$ with $cs(T,w)<\frac{1}{\epsilon}$,
the integrality of the weights implies that a $(1+\epsilon)$-approximate solution 
to the problem of finding a connected $w$-safe set of minimum $w$-weight
must actually be an optimal solution.
Therefore, as part of the proof of Theorem \ref{ptas2}, we need to solve such instances optimally.
Note that, unlike \cite{bafulemamasatu,bafulemamasatu2}, 
we allow $0$ as a vertex weight, which means that sets of bounded $w$-weight
might still contain an unbounded number of vertices.

A natural approach to obtain a fully polynomial time approximation scheme (FPTAS) for the connected safe number of a weighted tree $(T,w)$ is to adapt Ibarra and Kim's \cite{ibki} famous FPTAS for the knapsack problem.
Their approach uses scaled and rounded versions of the weights,
which only approximate the original weights.
This led us to consider a slightly more general problem, 
where instances consist of a tree $T$, 
and two weight functions
$w^-:V(T)\to \mathbb{Z}_{\geq 0}$
and
$w^+:V(T)\to \mathbb{Z}_{\geq 0}$,
and where a set $S$ of vertices of $T$ is a {\it $(w^-,w^+)$-safe set in $T$} if
$w^-(C)\geq w^+(D)$ for every component $C$ of $T[S]$ and 
every component $D$ of $T-S$ 
such that $D$ is adjacent to $C$.
Inspired by the dynamic programming approach used in \cite{agcofulemamamonaotsatuxu},
we obtain the following.

\begin{theorem}\label{theorem2}
For a given tree $T$ of order $n$, and two given functions 
$w^-:V(T)\to \mathbb{Z}_{\geq 0}$ and
$w^+:V(T)\to \mathbb{Z}_{\geq 0}$
such that $w^-(S)\leq W$
for some connected $(w^-,w^+)$-safe set $S$ in $T$ and some $W\in \mathbb{N}$,
a connected $(w^-,w^+)$-safe set in $T$ of minimum $w^-$-weight 
can be determined in $O(nW^4)$ time. 
\end{theorem}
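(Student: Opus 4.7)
The plan is to solve the problem via a bottom-up dynamic program on $T$ rooted at an arbitrary vertex $r$. Since any connected $(w^-, w^+)$-safe set $S$ induces a subtree of $T$, we may speak of the vertex $v_0 \in S$ closest to $r$ and call it the ``top'' of $S$; having fixed such a $v_0$, one has $S \subseteq V(T_{v_0})$ and $v_0 \in S$, and the components of $T - S$ split cleanly into those lying entirely inside $T_{v_0}$ and, when $v_0 \neq r$, the single external component $R_{v_0} := V(T) \setminus V(T_{v_0})$, which is attached to $S$ only through the edge from $v_0$ to its parent in $T$.

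For each vertex $v$ and each pair $\alpha, \beta \in \{0, 1, \ldots, W\}$, we will let $h(v, \alpha, \beta) = 1$ exactly when there is a connected $S_v \subseteq V(T_v)$ with $v \in S_v$, $w^-(S_v) = \alpha$, and $w^+(D) \leq \beta$ for every component $D$ of $T_v - S_v$; otherwise $h(v, \alpha, \beta) = 0$. At a leaf $\ell$ one has $h(\ell, w^-(\ell), \beta) = 1$ for all $\beta \geq 0$. At an internal vertex $v$ with children $c_1, \ldots, c_k$, the table will be built by merging the children one at a time into the initial singleton configuration $S_v = \{v\}$: for each child $c_i$ and each current entry we branch on whether (i) $c_i \in S_v$, in which case we combine with some entry $h(c_i, \alpha_i, \beta_i) = 1$ (adding $\alpha_i$ to the running $\alpha$ and replacing the running $\beta$ with $\max(\beta, \beta_i)$), or (ii) $c_i \notin S_v$, in which case $V(T_{c_i})$ must itself form a single $D$-component of $w^+$-weight $w^+(V(T_{c_i}))$, so we only need to check $w^+(V(T_{c_i})) \leq \beta$. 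Because $v \in S_v$ separates the subtrees $T_{c_i}$ from one another, every component of $T_v - S_v$ is produced exactly once.

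Once the tables $h(\cdot, \cdot, \cdot)$ are computed, the connected safe number will be
\[
\min\left\{\alpha : h(v_0, \alpha, \beta) = 1,\ \alpha \geq \beta,\ \alpha \geq w^+(R_{v_0}),\ v_0 \in V(T),\ \beta \in \{0, \ldots, W\}\right\},
\]
which can be extracted in a final $O(nW^2)$ pass, and an optimal connected safe set can be reconstructed from standard back-pointers. The DP holds $O(nW^2)$ entries per vertex, and merging a single child into the running table at $v$ costs $O(W^4)$ since all pairs $(\alpha, \beta) \times (\alpha_i, \beta_i)$ must be examined; summing this over all parent--child edges yields the claimed $O(nW^4)$ total running time.

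The main obstacle will be the careful bookkeeping in the merge step. One has to verify that the branch ``$c_i \notin S_v$'' really captures $V(T_{c_i})$ as exactly one $D$-component, which follows from the connectedness of $S$ and the fact that $v$ lies on every $S$-internal path to the top $v_0$: including a descendant of $c_i$ in $S$ while excluding $c_i$ would disconnect $S$. One also has to ensure that the external component $R_{v_0}$ is counted exactly once and only when $v_0 \neq r$. Once these invariants are in place, correctness follows by an induction on the depth of $v$, and the runtime analysis above completes the proof.
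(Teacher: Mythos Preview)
Your approach is correct and closely related to the paper's, but with a worthwhile simplification. Both are bottom-up dynamic programs on a rooted tree that process children one at a time and track the $w^-$-weight of the partial safe set together with (a bound on) the largest $w^+$-weight of an adjacent outside component. The paper, however, maintains \emph{two} families of states at each subtree: one with the subtree root in $S$ (recording the minimum possible ``largest outside $w^+$-component''), and one with the subtree root outside $S$ (recording the $w^+$-weight of the component containing the root and the maximum possible ``smallest adjacent inside $w^-$-component''). This extra machinery lets the paper read off the answer at the single global root $r$. You sidestep the $u\notin S$ states altogether by observing that a connected safe set has a unique highest vertex $v_0$; hence it suffices to store only the ``$v\in S$'' tables and then scan over all $n$ choices of the top $v_0$, checking $\alpha\ge\beta$ for the internal components and $\alpha\ge w^+(R_{v_0})$ for the single external one. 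The resulting DP is leaner for the same $O(nW^4)$ cost; the paper's version carries more state per node but needs no final scan over candidate tops. One minor slip in your write-up: ``$O(nW^2)$ entries per vertex'' should read $O(W^2)$ entries per vertex, i.e.\ $O(nW^2)$ in total.
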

Unfortunately, essential arguments used by Ibarra and Kim do not carry over,
and implementing their approach we only obtain the following result.

\begin{theorem}\label{theorem3}
Let $M$ be a positive integer.

For a given triple $(T,w,\epsilon)$, where 
$\epsilon$ is a positive real,
$T$ is a tree of order $n$, and 
$w:V(T)\to \mathbb{Z}_{\geq 0}$ is a function
such that 
\begin{enumerate}[(i)]
\item $\epsilon\leq \min\left\{\frac{1}{3},\frac{1}{M}\right\}$, and
\item $\max\{ w(u):u\in V(T)\}\leq M \min\{ w(u):u\in V(T)\}$,
\end{enumerate}
a connected $w$-safe set $S$ in $T$ with 
$w(S)\leq (1+3\epsilon+2\epsilon^2)\cdot cs(T,w)+w_{\max}$ 
can be determined in $O\left(\frac{(1+3\epsilon+M)^4n^5}{\epsilon^8}\right)$ time. 
\end{theorem}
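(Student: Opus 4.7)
The plan is to adapt Ibarra and Kim's scaling-and-rounding FPTAS for knapsack, using Theorem~\ref{theorem2} as the exact pseudopolynomial engine. Let $K$ be a scaling parameter proportional to $\epsilon^2 w_{\min}$, where $w_{\min}=\min_v w(v)$ (the precise constant will be pinned down to match the claimed bounds), and set
\[
w^-(v):=\lfloor w(v)/K\rfloor \quad\text{and}\quad w^+(v):=\lceil w(v)/K\rceil.
\]
The algorithm invokes Theorem~\ref{theorem2} on $(T,w^-,w^+)$ to compute a connected $(w^-,w^+)$-safe set $S$ of minimum $w^-$-weight and returns $S$. Because $K\lfloor x/K\rfloor\leq x\leq K\lceil x/K\rceil$, the $(w^-,w^+)$-safety of $S$ automatically yields $w$-safety: for adjacent components $C$ of $T[S]$ and $D$ of $T-S$,
\[
w(C)\;\geq\;Kw^-(C)\;\geq\;Kw^+(D)\;\geq\;w(D).
\]
Under condition~(ii), every $w^-(v)\leq M/\epsilon^2$, so $W=O((1+3\epsilon+M)n/\epsilon^2)$ is a valid bound in Theorem~\ref{theorem2} (certified by $V(T)$ itself, which is trivially $(w^-,w^+)$-safe), and the resulting running time $O(nW^4)$ matches the claimed $O((1+3\epsilon+M)^4n^5/\epsilon^8)$.

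For the approximation guarantee, I first convert $w^-$-weight to $w$-weight via $w(v)\leq K(w^-(v)+1)$ combined with $K|S|\leq Kw(S)/w_{\min}\leq\epsilon^2 w(S)$, giving $w(S)\leq Kw^-(S)/(1-\epsilon^2)$. Then I exhibit a connected $(w^-,w^+)$-safe set $S'$ of small $w^-$-weight by starting from an optimal connected $w$-safe set $S^*$ (of $w$-weight $cs(T,w)$) and augmenting it with a few boundary vertices to absorb the rounding slack: for each component $D$ of $T-S^*$, the gap $w^+(D)-w^-(S^*)\leq|D|+|S^*|$, and each boundary vertex $v$ added to $S^*$ raises $w^-(S^*)$ by at least $\lfloor w_{\min}/K\rfloor\geq 1/\epsilon^2-1$ (benefiting every $D$ simultaneously) and further shrinks the particular $D$ it came from. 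Using $|D|+|S^*|\leq 2\,cs(T,w)/w_{\min}$ and $w_{\max}\leq Mw_{\min}$, a total of $O(\epsilon^2)\cdot cs(T,w)/w_{\min}$ augmentations suffice, their combined $w$-weight is $O(\epsilon M)\,cs(T,w)+w_{\max}\leq O(\epsilon)\,cs(T,w)+w_{\max}$ by hypothesis~(i), and combining with $1/(1-\epsilon^2)\leq 1+2\epsilon^2$ yields the claimed $(1+3\epsilon+2\epsilon^2)\,cs(T,w)+w_{\max}$.

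The main technical obstacle I expect is the tight augmentation analysis: a priori $|D|+|S^*|$ can be as large as $n$, so a naive per-component charging would introduce $n$-dependence ruinous for the approximation. The essential trick is that every added boundary vertex boosts $w^-(S^*)$ \emph{globally}, which together with condition~(ii) lets one re-express augmentation counts in terms of $cs(T,w)$ rather than $n$. Calibrating the hidden constant in $K$ to match exactly $1+3\epsilon+2\epsilon^2$ and to fit the last augmentation vertex into a single additive $w_{\max}$ (rather than a larger slack) is where most of the delicate computation lives; the rest is bookkeeping and plugging back into Theorem~\ref{theorem2}.
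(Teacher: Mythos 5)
Your proposal is correct in substance and follows the same skeleton as the paper's proof: round via $\lfloor\cdot\rfloor$ and $\lceil\cdot\rceil$ to a $(w^-,w^+)$-instance, solve it exactly with Theorem~\ref{theorem2}, and relate the optimal $(w^-,w^+)$-safe weight back to $cs(T,w)$ by iteratively augmenting an optimal $w$-safe set (paying one extra vertex of weight at most $w_{\max}$ for the overshoot). The genuine difference is the normalization. The paper scales by $t=\epsilon^2 w(S_1)/n$, where $S_1$ is a $2$-approximate solution obtained from Theorem~\ref{thm2approx}; hypothesis~(ii) is then used to show $w_{\min}\geq t/\epsilon$, so that each rounded weight carries only a $(1\pm\epsilon)$ multiplicative error, and the augmentation claim (the paper's Claim~1 in that proof) bounds the largest $w^+(D)$ directly by $(1+3\epsilon)w^-(S)$; a separate case $t<1$ is handled by the exact algorithm. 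You instead scale by $K\propto\epsilon^2 w_{\min}$, which makes every rounded weight at least about $1/\epsilon^2$, lets you certify $W=O(Mn/\epsilon^2)$ via the trivially safe set $V(T)$, and shifts the use of the hypotheses: (ii) bounds the $w$-cost $w_{\max}\leq Mw_{\min}$ of each augmentation vertex and (i) ($\epsilon M\leq 1$) converts the resulting $O(\epsilon^2 M)\,cs(T,w)$ into $O(\epsilon)\,cs(T,w)$. Your route buys a small simplification (no need for the $2$-approximation or the $t<1$ case split), at the cost of the per-component counting argument $w^+(D)-w^-(S^*)\leq|D|+|S^*|\leq 2cs(T,w)/w_{\min}$. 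The one step you leave open, absorbing the $\epsilon^2 w_{\max}/(1-\epsilon^2)$ excess from dividing the additive $w_{\max}$ by $1-\epsilon^2$, does go through: since $w_{\max}\leq cs(T,w)$, it is swallowed by the slack between your multiplicative factor (roughly $1+\tfrac{9}{4}\epsilon$ before division) and the target $1+3\epsilon+2\epsilon^2$ for all $\epsilon\leq 1/3$, so the calibration you defer is routine.
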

We conjecture that $cs(T,w)$ is still intractable for instances $(T,w)$ that satisfy (ii) for some fixed $M$,
and provide the following result supporting this belief.

\begin{theorem}\label{theoremhard}
It is NP-hard to determine $cs(T,w)$
for a given star $T$ of order $n$, 
and a given weight function $w:V(T)\to \mathbb{N}$ 
for which there is one vertex $u^*$ of $T$ such that 
$\frac{1}{n}w(T)\leq 4\min\Big\{ w(u):u\in V(T)\setminus \{ u^*\}\Big\}$.
\end{theorem}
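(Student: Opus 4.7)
The plan is to reduce from the \emph{equal cardinality subset sum} problem (ECSS): given positive integers $b_1,\dots,b_N$ and targets $B,k$ with $1\le k\le N-1$, decide whether some $I\subseteq\{1,\dots,N\}$ with $|I|=k$ satisfies $\sum_{i\in I}b_i=B$. This problem is NP-hard via a standard padding reduction from \textsc{Subset Sum} (adjoin $N$ fresh copies of a sufficiently large constant $M$ to the item multiset, and ask for a cardinality-$N$ subset with target $B+NM$). Given an ECSS instance, let $A=\sum_i b_i$, set $K=A+B+1$, and construct a star $T$ on $n=N+2$ vertices with center $v_c$ and leaves $u^*,v_1,\dots,v_N$ weighted by
\begin{align*}
w(v_c)&=K+1,\\
w(u^*)&=(K+1)+kK+B,\\
w(v_i)&=K+b_i\qquad(i=1,\dots,N),
\end{align*}
where $u^*$ plays the role of the outlier leaf from the theorem.

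First I would verify condition (ii). Since $b_i\ge 1$, the minimum weight over $V(T)\setminus\{u^*\}$ equals $K+1$. Using $A+B=K-1$ and $k\le N-1$ one obtains $w(T)/n=((k+N+3)K+1)/(N+2)\le 2K+1\le 4(K+1)$, as required.

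The heart of the argument is the equivalence $cs(T,w)=(K+1)+kK+B$ iff the ECSS instance is a yes-instance. Every connected subset of a star is either a single vertex or consists of $v_c$ together with a subset of leaves. Single-vertex sets can be ruled out by routine weight comparisons: for $\{v_c\}$ the adjacent component $\{u^*\}$ is too heavy; for $\{u^*\}$ and for $\{v_i\}$ with $i\ge 1$, the remaining star is connected and, since $K>A+B$ and $k<N$, has weight strictly exceeding that of the chosen leaf. For $S=\{v_c\}\cup L$, the safety condition reduces to $w(v_c)+w(L)\ge w(v)$ for every excluded leaf $v$; including $u^*\in L$ is immediately suboptimal, so assume $u^*\notin L$, in which case the binding constraint is $w(L)\ge kK+B$. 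Writing $L=\{v_i:i\in I\}$ with $|I|=j$ yields $w(L)=jK+\sum_{i\in I}b_i$. The choice $K>A+B$ forces $j=k$ at the optimum: values $j<k$ are infeasible because even $I=\{1,\dots,N\}$ gives at most $(k-1)K+A<kK+B$, while values $j>k$ give $w(L)\ge (k+1)K>kK+B$ and thus a strictly worse solution. With $j=k$, $w(L)$ attains the lower bound $kK+B$ precisely when some $k$-subset of the $b_i$'s sums to $B$; otherwise $w(L)\ge kK+B+1$.

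The main obstacle is the calibration of $K$: it must be large enough ($K>A+B$) to force cardinality $k$ at the optimum, yet polynomially bounded in the input so as to preserve both the polynomial size of the reduction and condition (ii). The choice $K=A+B+1$ meets all three requirements simultaneously, and the correspondence established above turns any oracle for $cs(T,w)$ on the constructed star into a polynomial-time procedure for ECSS, proving the claimed NP-hardness.
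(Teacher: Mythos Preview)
Your argument is correct and yields the desired hardness, but it takes a genuinely different route from the paper's proof. The paper reduces from \textsc{Subset Sum} restricted to instances $(c_1,\dots,c_n,K)$ with $\max_i c_i < K < \sum_i c_i$ and $2\min_i c_i \geq \max_i c_i$ (the second condition being obtained by adding one large constant to every $c_j$ and the corresponding multiple to $K$). It then builds a star of order $n+2$ whose \emph{center} has weight $1$ and whose leaves have weights $c_1,\dots,c_n,K+1$; the exceptional vertex $u^*$ is the light center, and $cs(T,w)=K+1$ iff the instance is a yes-instance. By contrast, you go through equal-cardinality subset sum, place a heavy weight $K+1$ at the center, shift all leaf weights by $K$, and let $u^*$ be a heavy leaf; the cardinality constraint is then enforced automatically by the shift. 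The paper's construction is shorter and avoids the intermediate ECSS step, while your construction shows that the hardness persists even when the center itself is not the outlier vertex and no leaf except $u^*$ deviates much from the average --- a mildly stronger statement about where the ``bad'' vertex may sit.

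One small point to clean up: your parenthetical description of the \textsc{Subset Sum} $\to$ ECSS reduction is not quite right as written. Simply adjoining $N$ copies of $M$ to the item set and asking for a cardinality-$N$ subset with target $B+NM$ does not work (a subset using $j$ original items and $N-j$ dummies would need the original items to sum to $B+jM$). The standard fix is to also shift each original item by $M$; then a cardinality-$N$ subset drawn from the $2N$ items $a_1+M,\dots,a_N+M,M,\dots,M$ sums to $B+NM$ iff the chosen original items sum to $B$. Since the NP-hardness of ECSS is folklore and you explicitly flag the reduction as standard, this does not affect the validity of your proof, but the sentence should be corrected. A second cosmetic slip: where you write ``even $I=\{1,\dots,N\}$ gives at most $(k-1)K+A$'' you presumably mean ``even taking $\sum_{i\in I}b_i$ as large as possible, namely $\leq A$''; the displayed $I$ has the wrong cardinality.
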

Our proofs of Theorem \ref{ptas2} and Theorem \ref{theorem3} 
both rely on an efficient $2$-approximation algorithm for the connected safe number of a weighted tree. 
Bapat et al.~\cite{bafulemamasatu2} provide such an algorithm
but they require the weight function to be strictly positive, and some of their arguments seem not to work when vertices of weight $0$ are allowed.
Therefore, we include the following result.

\begin{theorem}\label{thm2approx}
For a given tree $T$ of order $n$, 
and a given function $w:V(T)\to \mathbb{Z}_{\geq 0}$,
a connected $w$-safe set $S$ with $w(S)\leq 2cs(T,w)$ 
can be determined in time $O(n^2)$.
\end{theorem}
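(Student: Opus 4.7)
The plan is to adapt the $2$-approximation algorithm of Bapat et al.~\cite{bafulemamasatu2} so that it also handles vertices of weight $0$. Their algorithm, on an input weighted tree, enumerates a family of $O(n)$ structurally natural candidate connected subsets of $V(T)$ (for instance, for each edge $e$ of $T$ the heavier of the two subtrees of $T - e$, together with some singleton candidates), verifies $w$-safety of each in $O(n)$ time after an $O(n)$ preprocessing pass that roots $T$ at an arbitrary vertex and computes all subtree weights, and returns the lightest safe candidate, for total running time $O(n^2)$. I would reuse this framework, with two modifications aimed at zero-weight vertices: first, a ``zero-weight absorption'' step that repeatedly extends every candidate $S$ by any vertex of weight $0$ adjacent to $S$ -- this preserves connectivity, preserves $w(S)$, and can only shrink the remaining components of $T - S$, hence only weakly helps $w$-safety; second, an adjustment of the candidate family so that every vertex eligible to seed a near-optimal safe set is covered after absorption.

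\textbf{Analysis.} Every returned candidate is a connected $w$-safe set by construction. For the $2$-approximation bound, let $S^*$ be an optimal connected $w$-safe set with $cs(T,w) = w(S^*)$, and let $D_1, \ldots, D_k$ be the components of $T - S^*$; by safety, $w(D_i) \leq w(S^*)$ for each $i$. The analysis splits into two cases. If $|S^*| = 1$, a singleton candidate at the unique vertex of $S^*$, after zero-weight absorption, reproduces $S^*$ and the algorithm returns an optimum. If $|S^*| \geq 2$, a centroid-style sliding argument along the subtree $T[S^*]$ (as in~\cite{bafulemamasatu2}) locates a candidate cut $e^* \in T$ whose heavier side has weight at most $w(S^*) + \max_i w(D_i) \leq 2\, cs(T,w)$, so the algorithm returns a candidate of weight at most $2\, cs(T,w)$.

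\textbf{Main obstacle.} The principal technical difficulty is that the sliding argument in~\cite{bafulemamasatu2} advances a candidate cut ``one vertex at a time'' along $T[S^*]$ and relies on strict positivity of the weights: a long zero-weight segment can be traversed without any change in the weight balance between the two sides, so the usual ``first edge past the centroid'' need not be well defined and their analysis breaks down. The zero-weight absorption step is designed to resolve exactly this: it effectively contracts each maximal zero-weight connected subgraph of $T$ to a single vertex, after which only a few isolated zero-weight vertices remain, the sliding argument of~\cite{bafulemamasatu2} applies on the quotient instance, and the chosen candidate lifts back to $T$ at no weight cost.
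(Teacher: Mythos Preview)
Your candidate family is too small, and the centroid-sliding bound you assert is false already for strictly positive weights. Take a star $T$ with center $c$ of weight $1$ and four leaves $\ell_1,\ldots,\ell_4$, each of weight $10$. Then $S^*=\{c,\ell_1\}$ is an optimal connected $w$-safe set with $cs(T,w)=11$, and the components of $T-S^*$ are three singleton leaves, so $\max_i w(D_i)=10$ and your claimed upper bound for the best heavier side is $w(S^*)+\max_i w(D_i)=21$. But for every edge $e$ of $T$ the two sides of $T-e$ have weights $10$ and $31$, so every ``heavier side'' candidate has weight $31>2\,cs(T,w)=22$. No singleton is $w$-safe, and there are no zero-weight vertices to absorb. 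Your algorithm therefore returns a set of weight $31$, which is not a $2$-approximation.

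The failure is structural, not a zero-weight artifact: an edge cut splits $T$ into exactly two pieces, so whenever the optimum $S^*$ has a vertex $v$ with many components $D_i$ of $T-S^*$ hanging off it, any edge incident with $v$ puts all but one of those $D_i$ on the same side as $S^*$, and that side can be far heavier than $2w(S^*)$. Sliding along $T[S^*]$ cannot help, since crossing $v$ swaps at most one branch for another. Thus your ``main obstacle'' and the absorption fix address a non-problem while the real gap remains. The paper's proof does something genuinely different: for each choice of root $r$ it orders the vertices by decreasing rooted-subtree weight $c(u)=w(T_u)$, and for each threshold $W$ considers the connected prefix $\{u:c(u)>W\}\cup\{r\}$, greedily augmented to weight in $[W,W+w_{\max}]$; the minimum feasible $W$ over all roots is at most $cs(T,w)$, so the certifying set has weight at most $cs(T,w)+w_{\max}\le 2\,cs(T,w)$. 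This threshold construction handles high-degree vertices correctly precisely because it is not an edge cut: in the star above it produces $\{c,\ell_1\}$ itself.
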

Since $cs(G,w)\leq 2s(G,w)$ for every weighted graph $(G,w)$, 
our approximation results for the connected safe number
immediately imply approximation results for the safe number,
where the approximation guarantees double.

Our last contribution concerns an upper bound.
Even though (\ref{e1}) and (\ref{e2}) are not difficult to show, it is not easy to improve these bounds.
One reason might be that the typical conditions involving, for instance, bounds on vertex degrees or on the connectivity, 
have the same effect on the components of both relevant subgraphs $G[S]$ and $G-S$ of $G$.
We extend (\ref{e2}) as follows to block graphs,
that is, to graphs whose blocks are all complete.

\begin{theorem}\label{theorembound1}
If $G$ is a connected block graph 
of order $n$ whose clique number equals $\omega$, 
then $cs(G)\leq \max\{ \lceil n/3\rceil,\lceil \omega/2\rceil\}$.
\end{theorem}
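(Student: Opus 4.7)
Let $k:=\max\{\lceil n/3\rceil,\lceil\omega/2\rceil\}$. The plan is to construct a set $S\subseteq V(G)$ such that $G[S]$ is connected, $|S|\leq k$, and every component of $G-S$ has at most $|S|$ vertices; such an $S$ is a (unit-weight) connected safe set and yields $cs(G)\leq|S|\leq k$. The argument splits on whether $G$ has a block of size more than $2n/3$.

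\emph{Case 1: $\omega>2n/3$.} Then $\lceil\omega/2\rceil\geq\lceil n/3\rceil$ and $k=\lceil\omega/2\rceil$. Let $B$ be a block with $|V(B)|=\omega$. For each cut vertex $c\in V(B)$, let $h_c$ be the number of vertices $v\in V(G)\setminus V(B)$ whose only $v$-to-$V(B)$ path in $G$ passes through $c$, and let $H_c$ be the subgraph of $G$ induced by $\{c\}$ and these vertices. Since $\sum_c h_c=n-\omega<\omega/2$, the set $C^+:=\{c\in V(B):c\text{ is a cut vertex of }G\text{ and }h_c\geq 1\}$ satisfies $|C^+|\leq\sum_c h_c<\omega/2\leq\lceil\omega/2\rceil$. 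I choose $S\subseteq V(B)$ with $|S|=\lceil\omega/2\rceil$ and $C^+\subseteq S$; $S$ is connected because $B$ is a clique. The components of $G-S$ are then $V(B)\setminus S$, of size $\lfloor\omega/2\rfloor\leq|S|$ (with no branch attached, since $C^+\subseteq S$), and, for each $c\in S\cap C^+$, the components of $G[V(H_c)\setminus\{c\}]$, each of size at most $h_c\leq n-\omega<|S|$.

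\emph{Case 2: $\omega\leq 2n/3$.} Then $\lceil\omega/2\rceil\leq\lceil n/3\rceil=k$, and it suffices to find $S$ with $|S|\leq\lceil n/3\rceil$. I will extend the tree centroid argument underlying (\ref{e2}) to the block-cut tree $\mathcal{T}$ of $G$: weight each cut vertex of $G$ in $\mathcal{T}$ by $1$ and each block $B'$ by $|V(B')|$ minus the number of cut vertices of $G$ in $V(B')$, so that the total weight of $\mathcal{T}$ equals $n$, and pick a weighted centroid $x^*$ of $\mathcal{T}$. If $x^*=v$ is a cut vertex, every component of $G-v$ has at most $n/2$ vertices and I set $S:=\{v\}$. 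If $x^*=B^*$ is a block, every branch of $G$ at a cut vertex $c\in V(B^*)$ (defined analogously to Case 1) has at most $n/2$ vertices and $|V(B^*)|\leq 2n/3$; I initialize $S$ as a subset of $V(B^*)$ of size at most $\lceil\omega/2\rceil\leq\lceil n/3\rceil$ containing the cut vertices of $B^*$ with the heaviest branches. In either subcase I grow $S$ greedily: while some component $D$ of $G-S$ satisfies $|D|>|S|$, I add to $S$ a vertex of $D$ adjacent to $S$.

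\emph{Main obstacle.} The principal difficulty is showing that the greedy growth of Case 2 terminates with $|S|\leq\lceil n/3\rceil$. Each iteration raises $|S|$ by one and weakly lowers $\max_D|D|$: the maximum strictly drops whenever it is attained by a unique component, while several equal-maximum components require a brief round-robin before a strict drop occurs. Combining the centroid bound $\max_D|D|\leq n/2$ at the start with a potential-function analysis on the pair $(|S|,\max_D|D|)$ mirroring the tree proof of (\ref{e2}) yields $|S|\leq\lceil n/3\rceil$, with small-$n$ edge cases verified directly. In the subcase $x^*=B^*$, the clique structure of $B^*$ is essential: absorbing vertices of $V(B^*)$ into $S$ simultaneously consumes part of the block and disconnects its incident branches, so the seed $S\subseteq V(B^*)$ of size at most $\lceil\omega/2\rceil$ leaves the subsequent branches small enough for $|S|$ to stay within the bound under further growth.
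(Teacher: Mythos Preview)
Your Case~1 ($\omega>2n/3$) is correct and is essentially the paper's treatment of the large-clique regime.

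Case~2, however, has a genuine gap at precisely the point you yourself flag as the ``main obstacle''. You assert that greedy growth from a weighted centroid of the block--cut tree terminates with $|S|\le\lceil n/3\rceil$, citing ``a potential-function analysis on the pair $(|S|,\max_D|D|)$ mirroring the tree proof of~(\ref{e2})'', but no potential is exhibited. The obvious candidate $\Phi=\max_D|D|-|S|$ yields only $|S|\le\lfloor n/2\rfloor$: it starts at $\Phi\le n/2-1$, drops by at least one per step, and greedy halts at $\Phi\le 0$. Pushing the bound down to $\lceil n/3\rceil$ is exactly where the real work lies, and an appeal to an unspecified analogy with the tree case is not a proof. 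The subcase $x^*=B^*$ is vaguer still: you seed $S$ with at most $\lceil\omega/2\rceil$ vertices of $B^*$, but $B^*$ is the \emph{centroid} block, not necessarily a maximum clique, so $\omega$ is the wrong parameter here; moreover the seed may omit many cut vertices of $B^*$, leaving the remainder of the clique together with all its attached branches in a single large component, and your one-sentence appeal to ``the clique structure of $B^*$'' does not explain why the ensuing growth stays within budget.

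For comparison, the paper avoids any greedy analysis. It fixes $|S|=\lceil n/3\rceil$ from the outset, choosing $S$ among all sets of that size so that the largest component $D$ of $G-S$ is as small as possible, and then runs a vertex-exchange contradiction: if $n(D)>\lceil n/3\rceil$, one can swap a well-chosen vertex of $S$ for a neighbour in $D$ and strictly decrease the largest component, contradicting minimality. The block structure enters only in locating the swapped pair. Because $|S|$ is held fixed throughout, there is no termination bound to establish --- which is exactly the step your outline is missing.
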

The next section contains all proofs.

\section{Proofs}

For an integer $k$, let $[k]$ be the set of all positive integers at most $k$,
and let $[k]_0=\{ 0\}\cup [k]$.
We use the usual conventions $\min\emptyset=\infty$ and $\max\emptyset=-\infty$.

Since Theorem \ref{ptas2} relies on Theorem \ref{theorem2} and Theorem \ref{thm2approx}, we prove these results first.
As explained in the introduction, we proceed similarly as in \cite{agcofulemamamonaotsatuxu} for Theorem \ref{theorem2}.

\begin{proof}[Proof of Theorem \ref{theorem2}]
Let $T$ be a tree, and let 
$w^-:V(T)\to \mathbb{Z}_{\geq 0}$
and 
$w^+:V(T)\to \mathbb{Z}_{\geq 0}$
be two functions.
We root the tree $T$ in some vertex $r$ of $T$.
For every vertex $u$ of $T$, we fix a linear order $v_1,\ldots,v_k$ of its children,
and, for every subset $I$ of $[k]$, 
let $T_{(u,I)}$ be the subtree of $T$ rooted in $u$ 
that contains $u$ as well as, for every $i$ in $I$,
the vertex $v_i$ and all descendants of $v_i$ in $T$.
Let the positive integer $W$ be such that $w^-(S)\leq W$ for some connected $(w^-,w^+)$-safe set $S$ in $T$.

For every vertex $u$ of $T$ with $k$ children, and every subset $I$ of $[k]$, let 
$$p_{(u,I)}:\{ 0,1\}\times [W]_0\times [W]_0\to \mathbb{Z}_{\geq 0}\cup \{ \pm\infty\}$$ 
be such that, for every $s$ and $a$ in $[W]_0$
\begin{itemize}
\item $p_{(u,I)}(1,s,a)$ is the minimum $b$ 
in $\mathbb{Z}_{\geq 0}\cup \{ \pm\infty\}$
for which there is some set $S$ of vertices of $T_{(u,i)}$ such that
\begin{enumerate}[(i)]
\item $u\in S$, 
\item $w^-(S)=s$ and $T_{(u,i)}[S]$ is connected,
\item $w^-(C_u)=a$, where $C_u$ is the component of $T_{(u,i)}[S]$ that contains $u$, 
\item $w^-(C)\geq w^+(D)$ for 
every component $C$ of $T_{(u,i)}[S]$ that is distinct from $C_u$, 
and 
every component $D$ of $T_{(u,i)}-S$
that is adjacent to $C$, and
\item $w^+(D)\leq b$ for
every component $D$ of $T_{(u,i)}-S$
that is adjacent to $C_u$.
\end{enumerate}
If no set $S$ satisfies (i), (ii), (iii), and (iv),
then let $p_{(u,I)}(1,s,a)=\infty$. 
\item $p_{(u,I)}(0,s,a)$ is the maximum $b$ 
in $\mathbb{Z}_{\geq 0}\cup \{ \pm\infty\}$
for which there is some set $S$ of vertices of $T_{(u,i)}$ such that
\begin{enumerate}[(i)]
\item $u\not\in S$, 
\item $w^-(S)=s$ and $T_{(u,i)}[S]$ is connected,
\item $w^+(D_u)=a$, where $D_u$ is the component of $T_{(u,i)}-S$ that contains $u$, 
\item $w^-(C)\geq w^+(D)$ for 
every component $C$ of $T_{(u,i)}[S]$,
and 
every component $D$ of $T_{(u,i)}-S$ 
that is adjacent to $C$ and distinct from $D_u$, and
\item $w^-(C)\geq b$ for
every component $C$ of $T_{(u,i)}[S]$
that is adjacent to $D_u$.
\end{enumerate}
If no set $S$ satisfies (i), (ii), (iii), and (iv),
then let $p_{(u,I)}(0,s,a)=-\infty$. 
\end{itemize}
The following three claims contain recursive formulas for some $p_{(u,I)}$.

\begin{claim}\label{claim1}
Let $u$ be a vertex of $T$, and let $s,a\in [W]_0$.
\begin{enumerate}[(a)]
\item $p_{(u,\emptyset)}(1,s,a)=
\begin{cases}
-\infty & \mbox{, if $s=a=w^-(u)$, and}\\
\infty & \mbox{, otherwise.}
\end{cases}$
\item $p_{(u,\emptyset)}(0,s,a)=
\begin{cases}
\infty & \mbox{, if $s=0$ and $a=w^+(u)$, and}\\
-\infty & \mbox{, otherwise.}
\end{cases}$
\end{enumerate}
\end{claim}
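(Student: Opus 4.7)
The plan is to unpack the definitions in the degenerate case $I=\emptyset$, in which the subtree $T_{(u,\emptyset)}$ consists of the single vertex $u$. Under this hypothesis only two subsets of $V(T_{(u,\emptyset)})$ are available, namely $\{u\}$ and $\emptyset$, and the condition ``$u\in S$'' or ``$u\not\in S$'' singles out one of them in each part. So each part collapses to checking a single candidate set against conditions (i)--(v), together with the fallback rule for when no candidate qualifies.

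For part (a), ``$u\in S$'' forces $S=\{u\}$. Then $w^-(S)=w^-(u)$ and $C_u=\{u\}$, so (ii) and (iii) can hold only when $s=w^-(u)$ and $a=w^-(u)$; in the contrary case no set $S$ meets (i)--(iv), and the fallback gives $p_{(u,\emptyset)}(1,s,a)=\infty$, as claimed. When $s=a=w^-(u)$, both (iv) and (v) quantify over components of the empty subgraph $T_{(u,\emptyset)}-S=\emptyset$, hence hold vacuously for \emph{every} $b\in\mathbb{Z}_{\geq 0}\cup\{\pm\infty\}$; the minimum such $b$ is therefore $-\infty$, matching the formula.

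For part (b), ``$u\not\in S$'' forces $S=\emptyset$, which gives $w^-(S)=0$ and $D_u=\{u\}$ with $w^+(D_u)=w^+(u)$. So (ii) and (iii) can be satisfied only when $s=0$ and $a=w^+(u)$; otherwise the fallback yields $p_{(u,\emptyset)}(0,s,a)=-\infty$. In the remaining case, (iv) and (v) range over components of $T_{(u,\emptyset)}[S]=\emptyset$ and hold vacuously for every $b$, so the maximum admissible $b$ is $\infty$.

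I expect no real obstacle here beyond careful bookkeeping of the extended-real conventions, in particular distinguishing ``condition vacuously holds for every $b$'' (which yields $\pm\infty$ as the extremum over all of $\mathbb{Z}_{\geq 0}\cup\{\pm\infty\}$) from ``no candidate set exists'' (which invokes the separately stipulated fallback value). Once these two sources of $\pm\infty$ are kept apart, the verification is a single-line check per part.
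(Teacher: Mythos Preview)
Your proposal is correct and follows the same approach as the paper, which simply observes that $T_{(u,\emptyset)}$ consists of the single vertex $u$ and that the equalities then follow from the definitions. Your version spells out in detail what the paper leaves implicit---in particular the distinction between the vacuous-(v) case producing $\pm\infty$ as an extremum and the no-admissible-$S$ case invoking the fallback---but the underlying reasoning is identical.
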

\begin{proof}[Proof of Claim \ref{claim1}]
Since $T_{(u,\emptyset)}$ contains only the vertex $u$,
the stated equalities follow immediately from the definitions.
\end{proof}

\begin{claim}\label{claim2}
Let $u$ be a vertex of $T$ with $k$ children $v_1,\ldots,v_k$, let $i\in [k]$,
and let $v_i$ have $\ell$ children.
Let $s,a\in [W]_0$.
\begin{enumerate}[(a)]
\item 
$$p_{(u,\{ i\})}(1,s,a)=
\begin{cases}
w^+(T_{(v_i,[\ell])}) & \mbox{, if $s=a=w^-(u)$,}\\
p_{(v_i,[\ell])}(1,s-w^-(u),a-w^-(u))
& \mbox{, if $s=a>w^-(u)$,}\\
\infty & \mbox{, otherwise.}
\end{cases}$$
\item 
$$p_{(u,\{ i\})}(0,s,a)=
\begin{cases}
\max\Big\{ a'\in [W]_0:p_{(v_i,[\ell])}(1,s,a')\leq a'\Big\} 
& \mbox{, if $a=w^+(u)$,}\\
p_{(v_i,[\ell])}(0,s,a-w^+(u))
& \mbox{, if $a>w^+(u)$,}\\
-\infty & \mbox{, otherwise.}
\end{cases}$$
\end{enumerate}
\end{claim}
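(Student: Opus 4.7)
The plan is to prove each of the two formulas by a short case analysis on whether the child $v_i$ lies in $S$, together with the observation that $T_{(u,\{i\})}$ differs from $T_{(v_i,[\ell])}$ only by the vertex $u$ and the single edge $uv_i$. In each case I will exhibit a weight-preserving correspondence between valid witnesses $S$ for $p_{(u,\{i\})}$ and valid witnesses for $p_{(v_i,[\ell])}$ in the appropriate state, and verify that the optimum value of $b$ transfers correctly.

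For part (a), condition (ii) together with $u \in S$ forces the component $C_u$ to coincide with $S$, so $s = a$ in every finite case; this already accounts for the ``otherwise $\infty$'' branch. If $s = a = w^-(u)$, then $S = \{u\}$, the whole subtree $T_{(v_i,[\ell])}$ is the unique component of $T_{(u,\{i\})} - S$ adjacent to $C_u$, and the minimum feasible upper bound is $b = w^+(T_{(v_i,[\ell])})$. If $s = a > w^-(u)$, then $v_i \in S$, and $S \setminus \{u\}$ witnesses $p_{(v_i,[\ell])}(1, s - w^-(u), a - w^-(u))$; the components of $T_{(v_i,[\ell])} - (S \setminus \{u\})$ adjacent to $C_{v_i}$ coincide with those of $T_{(u,\{i\})} - S$ adjacent to $C_u$, so the two minima agree. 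The reverse direction of each correspondence is immediate: add $u$ to the witness and re-check the conditions.

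For part (b), $u \notin S$ and $u \in D_u$. If $v_i \notin S$, then $v_i \in D_u$, so $a = w^+(u) + w^+(D_{v_i})$ where $D_{v_i}$ is the $v_i$-component of $T_{(v_i,[\ell])} - S$, which forces $a > w^+(u)$ and reduces the problem to $p_{(v_i,[\ell])}(0, s, a - w^+(u))$. If $v_i \in S$, then $D_u = \{u\}$, forcing $a = w^+(u)$, and the only component of $T_{(v_i,[\ell])}[S]$ adjacent to $D_u$ is the one containing $v_i$, of some $w^-$-weight $a'$. The maximum feasible $b$ in condition (v) is exactly $a'$, and the remaining safety conditions within $T_{(v_i,[\ell])}$ amount to the requirement that, for some state-$1$ witness at $v_i$ with component weight $a'$, every adjacent component $D$ satisfies $w^+(D) \leq a'$; that is precisely the inequality $p_{(v_i,[\ell])}(1, s, a') \leq a'$. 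Maximizing over $a'$ yields the displayed formula.

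The main subtlety, modest but genuine, is the asymmetry between $p_{(\cdot)}(1,\cdot,\cdot)$ (a \emph{minimum} upper bound on outer $w^+$-weights adjacent to $C_u$) and $p_{(\cdot)}(0,\cdot,\cdot)$ (a \emph{maximum} lower bound on outer $w^-$-weights adjacent to $D_u$). These two roles interact only in the second subcase of (b), through the fixed-point-style comparison $p_{(v_i,[\ell])}(1, s, a') \leq a'$, where a minimum is compared against the very index $a'$ we are optimizing over. Setting up that inequality correctly, and checking that the obvious bijection on witnesses preserves every condition of the definitions, is the only step requiring any care; the rest is mechanical bookkeeping.
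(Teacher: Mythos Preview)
Your proof is correct and follows essentially the same route as the paper's: in each part you split on whether $v_i$ lies in $S$, use the one-vertex correspondence $S \leftrightarrow S\setminus\{u\}$ between witnesses in $T_{(u,\{i\})}$ and $T_{(v_i,[\ell])}$, and check that the relevant optimum of $b$ transfers. Your discussion of the comparison $p_{(v_i,[\ell])}(1,s,a')\leq a'$ in part~(b) is a bit more explicit than the paper's, but the underlying argument is identical.
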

\begin{proof}[Proof of Claim \ref{claim2}]
First, we consider (a).
Let $S$, $b$, and $C_u$ be as in the definition of $p_{(u,\{ i\})}(1,s,a)$
such that $p_{(u,\{ i\})}(1,s,a)=b$, if $S$ exists.
If $s\not=a$, then $p_{(u,\{ i\})}(1,s,a)$ is $\infty$ by definition,
since $C_u$ must coincide with $T_{(u,\{ i\})}[S]$, whenever the latter is connected.
If $s=a=w^-(u)$, then $S$ contains only the vertex $u$, 
and all vertices of $T_{(u,\{ i\})}-u=T_{(v_i,[\ell])}$ 
belong to one unique component $D$ outside of $S$.
By definition, $b$ is smallest with $b\geq w^+(D)$, 
that is, $p_{(u,\{ i\})}(1,s,a)$ equals $w^+(D)=w^+(T_{(v_i,[\ell])})$.
If $s=a>w^-(u)$, then $S$ contains $u$ as well as $v_i$.
Removing $u$ from $T_{(u,\{ i\})}$ and $S$ yields a suitable set for
$p_{(v_i,[\ell])}(1,s-w^-(u),a-w^-(u))$,
and vice versa, which implies the stated equality.
 
Next, we consider (b).
Let $S$, $b$, and $D_u$ be as in the definition of $p_{(u,\{ i\})}(0,s,a)$,
such that $p_{(u,\{ i\})}(0,s,a)=b$, if $S$ exists.
If $a=w^+(u)$, then $D_u$ contains only $u$, and $v$ belongs to $S$.
The condition $p_{(v_i,[\ell])}(1,s,a')\leq a'$ ensures that (iii) holds, 
and the largest possible value for $a'$ corresponds to the maximum value of $b$.
If $a>w^+(u)$, then $D_u$ contains $u$ as well as $v_i$.
Removing $u$ from $T_{(u,\{ i\})}$ reduces the third entry $a$ by $w^+(u)$,
and we obtain the stated equality with $p_{(v_i,[\ell])}(0,s,a-w^+(u))$.
If $a<w^+(u)$, then no set $S$ with the necessary properties exists.
\end{proof}

\begin{claim}\label{claim3}
Let $u$ be a vertex of $T$ with $k$ children $v_1,\ldots,v_k$, let $i\in [k]$,
and let $v_i$ have $\ell$ children.
Let $s,a\in [W]_0$.
\begin{enumerate}[(a)]
\item 
\begin{eqnarray*}
p_{(u,[i])}(1,s,a) & = & 
\min\Big\{
\max\Big\{ p_{(u,[i-1])}(1,s',a'),p_{(u,\{ i\})}(1,s'',a'')\Big\}:\\
&& \,\,\,\,\,\,\,\,\,\,\,\,\,\,\,\,\,\, 
s',s'',a',a''\in [W]_0\mbox{ with }s+w^-(u)=s'+s''\\
&& \,\,\,\,\,\,\,\,\,\,\,\,\,\,\,\,\,\, 
\mbox{and }a+w^-(u)=a'+a''\Big\}.
\end{eqnarray*}
\item 
\begin{eqnarray*}
p_{(u,[i])}(0,s,a) & = & 
\max\Big\{
p_{(u,[i-1])}\Big(0,s,a-w^+(T_{(v_j,[\ell])})\Big),\\
&& \,\,\,\,\,\,\,\,\,\,\,\,\,\,\,\,\,\,\, 
p_{(u,[i-1])}\Big(0,s,a-w^+(V(T_{(u,[i-1])})\setminus \{ u\})\Big)\Big\}.
\end{eqnarray*}
\end{enumerate}
\end{claim}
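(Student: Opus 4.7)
The plan is to verify each recursion by decomposing the tree $T_{(u,[i])}$ into the two overlapping subtrees $T_{(u,[i-1])}$ and $T_{(u,\{i\})}$, which share only the root $u$. Given any $S$ witnessing the left-hand side, I would set $S' = S \cap V(T_{(u,[i-1])})$ and $S'' = S \cap V(T_{(u,\{i\})})$, and read off the corresponding parameters $(s',a')$ and $(s'',a'')$ for the two sub-instances; the converse direction (assembling $S$ from valid sub-instances) is then essentially symmetric.

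For part (a), the hypothesis $u \in S$ forces $u \in S' \cap S''$, so $S = S' \cup S''$ with a double count at $u$. The component $C_u$ of $u$ in $T_{(u,[i])}[S]$ is exactly $C'_u \cup C''_u$, where $C'_u$ and $C''_u$ are the corresponding components in the two sub-instances and again meet only in $u$. This yields the additive identities $s + w^-(u) = s' + s''$ and $a + w^-(u) = a' + a''$ used in the recursion. Every component of $T_{(u,[i])}[S]$ other than $C_u$ sits entirely within one of the two subtrees, as does every component of $T_{(u,[i])} - S$, so conditions (iv) of the definition split cleanly between the two sides; the best upper bound on $w^+(D)$ over components $D$ of $T_{(u,[i])} - S$ adjacent to $C_u$ is the larger of the two analogous bounds, which explains the outer maximum and the outer minimum.

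For part (b), since $T_{(u,[i])}[S]$ is connected and $u \notin S$, the set $S$ lies entirely inside one of the $i$ child-subtrees of $u$. The recursion partitions this according to whether $S$ meets $V(T_{(v_i,[\ell])})$ or not. In the first branch $S \subseteq V(T_{(u,[i-1])})$ and the whole subtree $T_{(v_i,[\ell])}$ merges into the component $D_u$ through the edge $uv_i$, which precisely accounts for the shift $a \mapsto a - w^+(T_{(v_i,[\ell])})$; adjacencies of $D_u$ to components of $T[S]$ are unchanged since no vertex of $T_{(v_i,[\ell])}$ lies in $S$, so the witnessing $b$ transfers verbatim. In the second branch $S \subseteq V(T_{(v_i,[\ell])})$ with $v_i \in S$, the new component $D_u$ is exactly $V(T_{(u,[i-1])})$, which supplies the complementary shift by $w^+(V(T_{(u,[i-1])}) \setminus \{u\})$; the lower bound $b$ now comes solely from the components meeting $D_u$ inside $T_{(v_i,[\ell])}$, and the outer maximum selects the better of the two configurations.

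The main obstacle I anticipate is in part (b): the asymmetry forbids $S$ from straddling the two subtrees, so the case analysis must ensure that every admissible $S$ is caught by exactly one term of the maximum, without any spurious configurations leaking through. In particular, I would take care with the boundary case $S \cap V(T_{(v_i,[\ell])}) = \{v_i\}$ and with components of $T_{(u,[i])} - S$ that only exist after merging the two subtrees through $u$, to confirm that the corresponding $w^-(C) \geq w^+(D)$ constraints are already enforced by condition (iv) in whichever sub-instance is invoked.
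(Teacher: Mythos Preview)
Your approach is the same as the paper's: decompose $T_{(u,[i])}$ at $u$ into $T_{(u,[i-1])}$ and $T_{(u,\{i\})}$, then for (a) restrict $S$ to each piece to get $S',S''$ and reassemble, and for (b) use connectedness of $S$ together with $u\notin S$ to force $S$ entirely into one of the two pieces. Your write-up is in fact more detailed than the paper's terse paragraph, and the explanations of the additive identities $s+w^-(u)=s'+s''$, $a+w^-(u)=a'+a''$ and of why components split cleanly between the two subtrees are sound.

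One small correction in your second branch of (b): you write ``$S \subseteq V(T_{(v_i,[\ell])})$ with $v_i \in S$'' and conclude that $D_u$ is exactly $V(T_{(u,[i-1])})$. Neither is forced; $S$ may be a connected set lying strictly below $v_i$ inside $T_{(v_i,[\ell])}$, in which case $D_u$ also contains $v_i$ and possibly further vertices of that subtree. The recursion still goes through, because the third argument of the sub-instance on $T_{(u,\{i\})}$ already records the full $w^+$-weight of $D_u\cap V(T_{(u,\{i\})})$; the shift by $w^+(V(T_{(u,[i-1])})\setminus\{u\})$ then accounts exactly for the portion of $D_u$ that lives in $T_{(u,[i-1])}$, regardless of whether $v_i\in S$. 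Once you drop the spurious assumption $v_i\in S$, your argument is complete and matches the paper's.
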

\begin{proof}[Proof of Claim \ref{claim3}]
First, we consider (a).
Let $S$, $b$, and $C_u$ be as in the definition of $p_{(u,\{ i\})}(1,s,a)$
such that $p_{(u,[i])}(1,s,a)=b$, if $S$ exists.
Considering $S'=S\cap V(T_{(u,[i-1])})$ and $S''=S\cap V(T_{(u,\{ i\})})$
implies that 
$p_{(u,[i])}(1,s,a)$ is at least the expression on the right hand side. Conversely, optimal choices for 
$S'\subseteq V(T_{(u,[i-1])})$ and $S''\subseteq V(T_{(u,\{ i\})})$ for the right hand side, and the set $S=S'\cup S''$ imply that
$p_{(u,[i])}(1,s,a)$ is at most the expression on the right hand side.

Next, we consider (b).
Let $S$, $b$, and $D_u$ be as in the definition of $p_{(u,\{ i\})}(0,s,a)$,
such that $p_{(u,[i])}(0,s,a)=b$, if $S$ exists.
Since $S$ induces a connected subgraph of $T_{(u,[i])}$, and does not contain $u$,
we obtain that 
either $S\subseteq V(T_{(u,[i-1])})$
or $S\subseteq V(T_{(u,\{ i\})})$.
In the first case, the component $D_u$ contains all vertices in $V(T_{(v_j,[\ell])})$
and 
$p_{(u,[i])}(0,s,a)$ equals 
$p_{(u,[i-1])}\Big(0,s,a-w^+(T_{(v_j,[\ell])})\Big)$,
and,
in the second case, the component $D_u$ contains all vertices in 
$V(T_{(u,[i-1])})\setminus \{ u\}$
and 
$p_{(u,[i])}(0,s,a)$ equals 
$p_{(u,[i-1])}\Big(0,s,a-w^+(V(T_{(u,[i-1])})\setminus \{ u\})\Big)$.
Since $b$ is maximized, 
$p_{(u,[i])}(0,s,a)$ equals the larger of these two terms.
\end{proof}
Based on these claims,
it is now easy to complete the proof.
Indeed, it follows easily from the definition of $p_{(u,I)}$ 
that the minimum $w^-$-weight $w^-_{\rm opt}$ of a connected $(w^-,w^+)$-safe set in $T$ 
is the minimum value of $s\in [W]_0$ such that 
\begin{itemize}
\item either $p_{(r,[d_T(r)])}(1,s,a)\leq a$ for some $a\in [s]_0$
\item or $p_{(r,[d_T(r)])}(0,s,a)\geq a$ for some $a\in [s]_0$.
\end{itemize}
Therefore, computing, for every vertex $u$ of $T$ with $k$ children, 
all values $p_{(u,I)}(0/1,s,a)$, where 
\begin{enumerate}[(i)]
\item either $I=\emptyset$ and $s,a\in [W]_0$,
\item or $I=\{ i\}$ for some $i\in [k]$ and $s,a\in [W]_0$, 
\item or $I=[i]$ for some $i\in [k]$ and $s,a\in [W]_0$, 
\end{enumerate}
allows to determine $w^-_{\rm opt}$.
By Claim \ref{claim1}, 
each of the $O(nW^2)$ terms as in (i) can be computed in $O(1)$ time.
By Claim \ref{claim2}, 
each of the $O(nW^2)$ terms as in (ii) can be computed in $O(W)$ time.
By Claim \ref{claim3}, 
each of the $O(nW^2)$ terms as in (iii) can be computed in $O(W^2)$ time.
Therefore, 
memorizing optimal choices within the recursions, 
allows to determine $w^-_{\rm opt}$
as well as a connected $(w^-,w^+)$-safe set in $T$ of minimum $w^-$-weight
in $O(nW^4)$ time.
\end{proof}

\begin{proof}[Proof of Theorem \ref{thm2approx}]
Let $T$ and $w$ be as in the statement.
Let $w_{\max}=\max\{ w(u):u\in V(T)\}$.
Clearly, $w_{\max}\leq cs(T,w)\leq w(T)$.
Note that 
$cs(T,w)=0$ 
if and only if $w(T)=0$
if and only if the empty set is a connected $w$-safe set.
Therefore, we may assume that $cs(T,w)>0$.

We say that an integer $W$ in $[w(T)]_0$ is {\it nice} 
if there is a connected set $S$ of vertices of $T$ with 
$W\leq w(S)\leq W+w_{\max}$, and
$w(D)\leq W$ for every component $D$ of $T-S$.
In this case, the set $S$ {\it certifies} that $W$ is nice.
For a vertex $r$ of $T$,
we say that an integer $W$ in $[w(T)]_0$ is {\it $r$-nice} 
if there is a connected set $S$ of vertices of $T$ with 
$r\in S$,
$W\leq w(S)\leq W+w_{\max}$, and
$w(D)\leq W$ for every component $D$ of $T-S$.
Similarly as above, 
we say that such a set $S$ {\it certifies} that $W$ is $r$-nice.

If $S=\emptyset$ certifies that some integer $W$ is nice, 
then $T-S$ has only one component $T$, 
and $w(T)\leq W\leq w(S)=0$ implies the contradiction $w(T)=0$.
This implies that $W$ is nice if and only if 
$W$ is $r$-nice for some vertex $r$ of $T$,
and, hence, the smallest nice integer $W_{\min}$ 
equals the smallest integer that is $r$-nice for some vertex $r$ of $T$.

\setcounter{claim}{0}

Let $r$ be a vertex of $T$, and consider $T$ as being rooted in $r$.
For every vertex $u$ of $T$, let $c(u)=w(T_u)$, 
where $T_u$ is the subtree of $T$ containing $u$ as well as all descendants of $u$ in $T$ rooted in $r$.
The values $(c(u))_{u\in V(T)}$ can be determined in $O(n)$ time 
processing the vertices of $T$ in reverse breadth first search order.
Since $c(u)\geq c(v)$ whenever $u$ is a parent of $v$,
we can determine in $O(n)$ time an order $u(1),\ldots,u(n)$ 
of the vertices of $T$ such that 
\begin{itemize}
\item $r=u(1)$, 
\item $c(u(1))\geq \ldots \geq c(u(n))$, and 
\item $\{ u(1),\ldots,u(i)\}$ is connected for every $i\in [n]$.
\end{itemize}
Let 
$$I=\{ i\in [n-1]:c(u(i))>c(u(i+1))\}.$$
Let $1\leq i_1<\ldots <i_k\leq n-1$ be such that $I=\{ i_1,\ldots,i_k\}$.
Set 
$i_0=1$,
$i_{k+1}=n$, and
$c(u(i_{k+1}+1))=-\infty$.

Let 
$$W_r=\min\Big\{\max\Big\{c(u(i_j+1)),
w(u(1))+\cdots+w(u(i_j))-w_{\max}\Big\}:j\in [k+1]_0\Big\}.$$
Note that $W_r$ can be determined in $O(n)$ time.

\begin{claim}\label{claimapprox2}
$W_r$ is the smallest $r$-nice integer.
\end{claim}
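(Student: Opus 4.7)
The plan is to identify both $W_r$ and the smallest $r$-nice integer as minima of a common functional, and then compare them. For a connected set $S\ni r$, set
\[
F(S)=\max\bigl\{w(S)-w_{\max},\,\max\{w(D):D\text{ component of }T-S\}\bigr\},
\]
with $\max\emptyset=-\infty$. An integer $W$ is $r$-nice iff some such $S$ is \emph{valid} (meaning $w(S)\geq\max_D w(D)$) and satisfies $F(S)\leq W\leq w(S)$, so the smallest $r$-nice integer equals $W^*:=\min_S F(S)$ over valid $S$. Writing $S_i=\{u(1),\ldots,u(i)\}$, the greedy construction of the ordering ensures that the components of $T-S_i$ are exactly the subtrees $T_v$ for $v$ on the frontier of $S_i$, and $u(i+1)$ is chosen to maximize $c$, so the largest component of $T-S_i$ has weight $c(u(i+1))$. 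As $i$ ranges over any block $\{i_j,i_j+1,\ldots,i_{j+1}-1\}$, the value $c(u(i+1))$ stays constant while $w(S_i)$ is non-decreasing, so $F(S_i)$ is non-decreasing on this block. Hence $W_r=\min_{j\in[k+1]_0}F(S_{i_j})=\min_{i\in[n]}F(S_i)$.

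For the direction $W^*\leq W_r$, pick a minimizer $j^*$. If $w(S_{i_{j^*}})\geq c(u(i_{j^*}+1))$, then $S_{i_{j^*}}$ is valid with $F(S_{i_{j^*}})=W_r$. Otherwise $W_r=c(u(i_{j^*}+1))\leq w(T)$, so the smallest index $i$ with $w(S_i)\geq W_r$ exists. Minimality gives $w(S_i)<W_r+w_{\max}$, and $i>i_{j^*}$ combined with the non-increasingness of $c(u(\cdot))$ gives $c(u(i+1))\leq W_r$, so $S_i$ is valid and $F(S_i)\leq W_r$.

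For the direction $W^*\geq W_r$, fix a valid $S$ and show $F(S)\geq W_r$. If $S=V(T)$ then $F(S)=w(T)-w_{\max}=F(S_n)\geq W_r$. Otherwise let $i^*=\min\{i:u(i)\notin S\}$, so $i^*\geq 2$ (as $r=u(1)\in S$) and $S\supseteq S_{i^*-1}$. The key structural observation is that if some $v\in T_{u(i^*)}$ lay in $S$, the unique $T$-path from $v$ to $r$ (which, by connectedness of $S$ in the tree, lies in $S$) would pass through $u(i^*)\notin S$, a contradiction; hence $T_{u(i^*)}\cap S=\emptyset$. Since the parent of $u(i^*)$ lies in $S_{i^*-1}\subseteq S$, the component of $T-S$ containing $u(i^*)$ is exactly $T_{u(i^*)}$, of weight $c(u(i^*))$. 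Combining with $w(S)\geq w(S_{i^*-1})$ yields
\[
F(S)\geq\max\bigl\{w(S_{i^*-1})-w_{\max},\,c(u(i^*))\bigr\}=F(S_{i^*-1})\geq W_r.
\]

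The main obstacle is the lower bound, specifically the greedy-exchange step. For any valid $S$ deviating from the greedy ordering at step $i^*$, one must produce a concrete large component of $T-S$; the cleanest choice is the entire subtree $T_{u(i^*)}$, but recognizing this as a component requires both that $u(i^*)$'s parent has already been absorbed into $S_{i^*-1}$ by the greedy rule, and that the connectedness of $S\ni r$ prevents $S$ from penetrating $T_{u(i^*)}$ past the missing root $u(i^*)$.
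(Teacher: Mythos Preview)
Your proof is correct and rests on the same structural facts as the paper's: the greedy sets $S_i=\{u(1),\ldots,u(i)\}$ are connected ``upsets'' of the rooted tree, the components of $T-S_i$ are precisely the subtrees $T_v$ with $v$ on the frontier, the heaviest such component has weight $c(u(i+1))$, and any connected $S\ni r$ must contain $S_{i^*-1}$ while missing the entire subtree $T_{u(i^*)}$ at the first skipped vertex. The paper uses these facts directly: it builds a certifying set by padding $S_{i_j}$ for an optimal $j$, and for the lower bound it picks the smallest $j$ with $c(u(i_j+1))\leq W_{\min}$ and argues that any certifying set must contain $S_{i_j}$. Your organization is different: you package both constraints into the functional $F(S)=\max\{w(S)-w_{\max},\max_D w(D)\}$, identify the smallest $r$-nice integer as $\min_{\text{valid }S}F(S)$, and reduce everything to comparing $F(S)$ with $\min_i F(S_i)$. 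This buys a cleaner symmetric statement and makes the role of validity explicit, at the cost of a slightly longer setup; the paper's version is terser but leaves the containment $S_{\min}\supseteq S_{i_j}$ and the construction of $S_r$ to the reader. One small point you leave implicit is why the parent of $u(i^*)$ lies in $S_{i^*-1}$: this follows since $S_{i^*}$ is connected and the same path-to-root argument forbids any child of $u(i^*)$ from lying in $S_{i^*-1}$.
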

\begin{proof}[Proof of Claim \ref{claimapprox2}]
If $j\in [k+1]_0$ is such that 
$$W_r=\max\left\{c(u(i_j+1)),
w(u(1))+\cdots+w(u(i_j))-w_{\max}\right\},$$
then iteratively adding to the set $\{ u(1),\ldots,u(i_j)\}$ 
vertices from $\{ u(i_j+1),\ldots,u(n)\}$
that have a neighbor in the current set
until the resulting set has $w$-weight at least $W_r$
yields a set $S_r$ that certifies that $W_r$ is $r$-nice.
This implies $W_{\min}\leq W_r$.
Conversely, if $j$ is the smallest index in $[k+1]_0$ such that 
$c(u(i_j+1))\leq W_{\min}$,
and $S_{\min}$ certifies that $W_{\min}$ is $r$-nice,
then $S_{\min}$ contains $\{ u(1),\ldots,u(i_j)\}$,
which implies 
$w(u(1))+\cdots+w(u(i_j))\leq w(S_{\min})\leq W_{\min}+w_{\max}$.
Altogether, we obtain 
$$W_{\min}\geq \max\left\{c(u(i_j+1)),
w(u(1))+\cdots+w(u(i_j))-w_{\max}\right\}\geq W_r.$$
\end{proof}
It follows that 
$W_{\min}=\min\{ W_r:r\in V(T)\}$ 
is the smallest nice integer.
Since each $W_r$ can be determined in $O(n)$ time,
$W_{\min}$ can be determined in $O(n^2)$ time.
Furthermore, a set $S_{\min}$ certifying that $W_{\min}$ is nice can be constructed in $O(n)$ time
similarly as the set $S_r$ in the proof of Claim \ref{claimapprox2}.
By definition, $cs(T,w)$ is nice, which implies $W_{\min}\leq cs(T,w)$.
Since $S_{\min}$ is a connected $w$-safe set, and
$$w(S_{\min})\leq W_{\min}+w_{\max}\leq cs(T,w)+cs(T,w)=2cs(T,w),$$
the proof is complete.
\end{proof}
In order to further prepare the proof of Theorem \ref{ptas2},
we need the following lemma.

\begin{lemma}\label{ptas1}
Let $T$ be a tree rooted in some vertex $r$, and let $w:V(T)\to \mathbb{Z}_{\geq 0}$.
Let $W$ and $\partial W$ be positive integers.
Let $S_0$ be the set of vertices of $T$ that consists of $r$ and all vertices $u$ of $T$ such that the subtree of $T$ that contains $u$ as well as all descendants of $u$ in $T$ has $w$-weight more than $W$.
Let $L$ be the set of vertices $u$ in $V(T)\setminus S_0$ with $w(u)>\partial W$.

There is a set $S$ of vertices of $T$ such that 
\begin{itemize}
\item $W\leq w(S)\leq W+\partial W$, 
\item $r\in S$, 
\item $T[S]$ is connected, and 
\item $w(D)\leq W$ for every component $D$ of $T-S$ 
\end{itemize}
if and only if 
there is a subset $L'$ of $L$ such that
\begin{enumerate}[(i)] 
\item $|L'|<\frac{W+\partial W}{\partial W}$,
\item the smallest subtree $T'$ of $T$ that contains $S_0\cup L'$ satisfies $w(T')\leq W+\partial W$ and $L'=L\cap V(T')$, and
\item the component $T''$ of $T-(L\setminus L')$ that contains $r$ satisfies $w(T'')\geq W$.
\end{enumerate}
\end{lemma}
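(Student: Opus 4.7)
The plan is to establish the equivalence via the natural correspondence $L'=S\cap L$, after a preliminary observation that every candidate $S$ must contain $S_0$. First I would note that $S_0$ is itself connected and contains $r$, since $w(T_p)\geq w(T_u)>W$ whenever $p$ is the parent of some $u\in S_0\setminus\{r\}$. Then I would show $S_0\subseteq S$ for every $S$ satisfying the four bullet points: $r\in S$, and if some $u\in S_0\setminus S$ existed, then since $T[S]$ is connected containing $r$ and every path from $r$ to a descendant of $u$ in $T$ must pass through $u$, no descendant of $u$ could lie in $S$ either; the component $D$ of $T-S$ containing $u$ would then equal the entire subtree $T_u$, giving $w(D)=w(T_u)>W$, a contradiction.

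For the forward direction, given such an $S$ I would set $L':=S\cap L$. Condition~(i) follows from $|L'|\partial W<w(L')\leq w(S)\leq W+\partial W$, using that each vertex of $L$ has weight strictly greater than $\partial W$ (with the trivial case $L'=\emptyset$ separated). For~(ii), since $S\supseteq S_0\cup L'$ and $T[S]$ is connected, the smallest subtree $T'$ containing $S_0\cup L'$ satisfies $V(T')\subseteq S$, whence $w(T')\leq w(S)\leq W+\partial W$; moreover $L\cap V(T')\subseteq L\cap S=L'$, and the reverse inclusion is immediate. For~(iii), $S\cap(L\setminus L')=\emptyset$ and $T[S]$ is connected containing $r$, so $S$ is contained in the component of $T-(L\setminus L')$ containing $r$, namely $T''$, giving $w(T'')\geq w(S)\geq W$.

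For the backward direction, given $L'$ satisfying (i)--(iii), I would construct $S$ by greedy extension. Observe first that $V(T')\subseteq V(T'')$, because $T'$ is connected, contains $r$, and avoids $L\setminus L'$ by~(ii). Starting from $S:=V(T')$, I would iteratively add vertices of $V(T'')\setminus V(T')$ adjacent to the current $S$, stopping as soon as $w(S)\geq W$. Two observations drive the verification: every vertex of $V(T'')\setminus V(T')$ has weight at most $\partial W$, since the $L$-vertices of $T''$ are exactly $L'\subseteq V(T')$; and $w(T'')\geq W$ guarantees the process reaches weight $\geq W$ before exhausting $V(T'')$. Consequently $W\leq w(S)\leq W+\partial W$, the set $S$ is connected with $r\in S$ by construction, and every component $D$ of $T-S$ is a subtree $T_d$ for some $d\notin V(T')\supseteq S_0$, hence $d\notin S_0$, so $w(D)=w(T_d)\leq W$ by the definition of $S_0$.

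The main obstacle I anticipate is the pair of observations that simultaneously control the two thresholds $W$ and $W+\partial W$: the inclusion $S_0\subseteq S$ (which turns the component-weight bound from a global condition on $S$ into the local fact that every $d$ outside $S$ lies outside $S_0$), and the complementary fact in the backward step that $V(T'')\setminus V(T')$ is disjoint from $L$ (which automatically keeps every greedy increment below $\partial W$, thereby converting the hypothesis $w(T'')\geq W$ into the exact window $[W,W+\partial W]$).
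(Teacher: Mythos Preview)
Your argument is correct and follows essentially the same route as the paper's proof: take $L'=S\cap L$ in the forward direction (using $S_0\subseteq S$ to get $V(T')\subseteq S\subseteq V(T'')$), and in the backward direction greedily grow $V(T')$ inside $V(T'')$ using that every vertex of $V(T'')\setminus V(T')$ lies outside both $L$ and $S_0$ and hence has weight at most $\partial W$. The only slip is in the preliminary step: for an arbitrary $u\in S_0\setminus S$ the component $D$ of $T-S$ containing $u$ need only \emph{contain} $T_u$ rather than equal it, but since $w(D)\geq w(T_u)>W$ the contradiction goes through unchanged.
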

\begin{proof}
First, suppose that the set $S$ has the stated properties. 
Since $w(S)\leq W+\partial W$, the set $S$ contains less than $\frac{W+\partial W}{\partial W}$ vertices $u$ with $w(u)> \partial W$.
This implies that the set $L'=L\cap S$ satisfies (i).
Since $S$ contains $r$, and $w(D)\leq W$ for every component $D$ of $T-S$, the set $S_0$ is a subset of $S$. This implies that, if $T'$ and $T''$ are as in the statement, then $T'$ is a subtree of $T[S]$, and $T[S]$ is a subtree of $T''$, which implies (ii) and (iii).
Altogether, the set $L'$ has the desired properties.

Next, suppose that some subset $L'$ of $L$ satisfies (i), (ii), and (iii).
Since $V(T')$ intersects $L$ exactly in $L'$, the tree $T'$ is a subtree of $T''$. Since $w(u)\leq \partial W$ for every vertex $u$ in $V(T'')\setminus V(T')$, the bounds on $w(T')$ and $w(T'')$ imply that iteratively adding to the tree $T'$ vertices from $V(T'')\setminus V(T')$ that have a neighbor in $T'$, one can construct a subtree of $T$ with vertex set $S$ such that $T'$ is a subtree of $T[S]$, the tree $T[S]$ is a subtree of $T''$, and $w(S)\leq W+\partial W$. Since $S_0$ is a subset of $S$, we obtain that $r\in S$ and that $w(D)\leq W$ for every component $D$ of $T-S$.
Altogether, the set $S$ has the desired properties.
\end{proof}

\begin{proof}[Proof of Theorem \ref{ptas2}]
Let $\epsilon$ be a fixed positive real.
Let $T$ be a tree of order $n$, let $w:V(T)\to\mathbb{Z}_{\geq 0}$, and let $W^*=cs(T,w)$.
We explain how to determine in time $O\left(\frac{1}{\epsilon^4}n^{\frac{4}{\epsilon}+O(1)}\right)$ 
a connected safe set in $T$ of $w$-weight at most $(1+3\epsilon)W^*$.
Using Theorem \ref{thm2approx}
we first determine in time $O(n^2)$ 
a connected safe set $S_1$ with 
$$W^*\leq w(S_1)\leq 2W^*.$$
Let $\partial W=\lfloor \epsilon w(S_1)/2\rfloor$. 

If $\partial W=0$, then $w(S_1)<2/\epsilon$, 
and an optimal connected safe set can be determined 
using Theorem \ref{theorem2} in time $O\left(\frac{n}{\epsilon^4}\right)$.
Therefore, we may assume that $\partial W$ is a positive integer, 
which implies 
$$\frac{\epsilon W^*}{4}\leq \frac{\epsilon w(S_1)}{4}\leq \partial W\leq \frac{\epsilon w(S_1)}{2}\leq \epsilon W^*.$$ 
Let $L$ be the set of vertices $u$ of $T$ with $w(u)>\partial W$.

We consider two cases.

\medskip 

\noindent {\bf Case 1} {\it There is no connected set $S$ of vertices of $T$ 
such that $W^*+\partial W\leq w(S)\leq W^*+2\partial W$,
and $w(D)\leq W^*$ for every component $D$ of $T-S$.}

\medskip 

\noindent Let $S^*$ be a connected safe set of minimum $w$-weight $W^*$.
Let $L^*=S^*\cap L$.
Let $T^*$ be the component of $T-(L\setminus L^*)$ that contains $S^*$.

If $$w(T^*)-w(S^*)=w(T^*)-W^*\geq \partial W,$$ 
then iteratively adding to $S^*$ vertices from $V(T^*)\setminus S^*$ 
that have a neighbor in $S^*$ yields a set $S$ having the stated properties. 
Since we assume that such a set does not exist, we obtain $w(T^*)-W^*<\partial W$, 
and $V(T^*)$ is a connected safe set of $T$ with 
$$w(T^*)\leq W^*+\partial W\leq (1+\epsilon)W^*.$$ 
Now, the key observation is that $T^*$ is completely determined by $L^*$ if this set in non-empty, 
and that $S^*$ contains at most $W^*/\partial W\leq 4/\epsilon$ many vertices from $L$. 
If $L^*$ is empty, then $T^*$ is one of the components of $T-L$. 
Therefore, considering the $O\left(n^{4/\epsilon}\right)$ different possible choices for $L^*$, a connected safe set of the form $V(T^*)$ that is of minimum $w$-weight can be determined in time $O\left(n^{\frac{4}{\epsilon}+O(1)}\right)$. 
In view of the above estimate of $w(T^*)$, this yields a $(1+\epsilon)$-approximation in this case.

\medskip 

\noindent {\bf Case 2} {\it There is a connected set $S$ of vertices of $T$ 
such that $W^*+\partial W\leq w(S)\leq W^*+2\partial W$,
and $w(D)\leq W^*$ for every component $D$ of $T-S$.}

\medskip 

\noindent For $i\in [k]$ with $k=\left\lceil\frac{4}{\epsilon}\right\rceil+1$, let
$W_i=(i-1)\partial W$. Note that $W_1=0$,  
\begin{eqnarray*}
W_k&\geq &\left(\left(\frac{4}{\epsilon}+1\right)-1\right)\partial W\geq \frac{4}{\epsilon}\cdot \frac{\epsilon w(S_1)}{4}=w(S_1),\mbox{ and}\\
W_k &\leq & \left(\frac{4}{\epsilon}+1\right)\partial W.
\end{eqnarray*}
For every $i\in [k]$, we determine whether there is a connected set $S_i$ of vertices of $T$ 
such that $W_i\leq w(S_i)\leq W_i+2\partial W$, and $w(D)\leq W_i$ for every component $D$ of $T-S_i$.
In view of Lemma \ref{ptas1}, for all $i\in [k]$, 
every time considering the $n$ choices for $r$,
this can be done in time $O\left(k\cdot n^{\frac{W_i+2\partial W}{2\partial W}+O(1)}\right)
=O\left(\frac{1}{\epsilon}n^{\frac{2}{\epsilon}+O(1)}\right)$.
Now, if $i\in [k-1]$ is such that $W_i\leq W^*\leq W_{i+1}=W_i+\partial W$, then the existence of the set $S$ implies that the set $S_{i+1}$ exists. Since $S_{i+1}$ is a connected $w$-safe set, and
$$w(S_{i+1})\leq W_{i+1}+2\partial W=W_i+3\partial W\leq W^*+3\partial W\leq (1+3\epsilon)W^*,$$ 
the set $S_{i+1}$ yields a $(1+3\epsilon)$-approximation in this case.

\medskip 

\noindent Altogether, returning the best of all the connected $w$-safe sets considered in Case 1 and Case 2 yields the desired $(1+3\epsilon)$-approximation, and,
estimating rather wastefully, requires $O\left(\frac{1}{\epsilon^4}n^{\frac{4}{\epsilon}+O(1)}\right)$ time,
which completes the proof. 
\end{proof}

\begin{proof}[Proof of Theorem \ref{theorem3}]
Let $M$ and $(T,w,\epsilon)$ be as in the statement,
that is, in particular, $w_{\max}\leq Mw_{\min}$,
where 
$w_{\max}=\max\{ w(u):u\in V(T)\}$ and 
$w_{\min}=\min\{ w(u):u\in V(T)\}$.
Using Theorem \ref{thm2approx}
we first determine in time $O(n^2)$ 
a connected safe set $S_1$ with 
$$cs(T,w)\leq w(S_1)\leq 2cs(T,w).$$
Let $t=\frac{\epsilon^2 w(S_1)}{n}$.

If $t<1$, then $w(S_1)\leq \frac{n}{\epsilon^2}$, 
and an optimal connected safe set can be determined 
using Theorem \ref{theorem2} in time $O\left(\frac{n^5}{\epsilon^8}\right)$.
Hence, we may assume that $t\geq 1$.
Let 
$$w^-:V(T)\to \mathbb{Z}_{\geq 0}:u\mapsto \lfloor w(u)/t\rfloor\mbox{, and }
w^+:V(T)\to \mathbb{Z}_{\geq 0}:u\mapsto \lceil w(u)/t\rceil.$$
Since $w_{\max}\geq \frac{w(S_1)}{n}=\frac{t}{\epsilon^2}$, and
$w_{\min}\geq \frac{w_{\max}}{M}\geq \frac{t}{\epsilon^2 M}\geq \frac{t}{\epsilon}$, 
we obtain
\begin{eqnarray*}
w^-(u) & \geq & \left(\frac{\frac{1}{\epsilon}-1}{\frac{1}{\epsilon}}\right)\frac{w(u)}{t}=(1-\epsilon)\frac{w(u)}{t},\mbox{ and}\\
w^+(u) & \leq & \left(\frac{\frac{1}{\epsilon}+1}{\frac{1}{\epsilon}}\right)\frac{w(u)}{t}=(1+\epsilon)\frac{w(u)}{t}.
\end{eqnarray*}
for every vertex $u$ of $T$.

Trivially, if a set of vertices of $T$ is $(w^-,w^+)$-safe, then it is also $w$-safe.
The following claim contains an approximate inverse of that statement,
which is a key observation, and motivated the crucial condition (ii).

\setcounter{claim}{0}

\begin{claim}\label{claim4}
If $S$ is a connected $w$-safe set in $T$, then there is a connected $(w^-,w^+)$-safe set $S'$ in $T$ with 
$w^-(S')\leq (1+3\epsilon)w^-(S)+w_{\max}/t$.
\end{claim}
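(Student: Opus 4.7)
The plan is to build $S'$ by enlarging $S$ just enough that $w^-(S')\geq(1+3\epsilon)w^-(S)$ while maintaining connectivity, and then to observe that this single inequality is strong enough to certify $(w^-,w^+)$-safety. Since $S$ is connected, $T[S]$ has the unique component $C=S$, so the only obligation is $w^-(S')\geq w^+(D')$ for every component $D'$ of $T-S'$ adjacent to $S'$.

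The key preliminary step is a bound on $w^+(D)$ for every component $D$ of $T-S$ adjacent to $S$. The $w$-safety of $S$ gives $w(D)\leq w(S)$. Inverting the inequality $w^-(u)\geq(1-\epsilon)w(u)/t$ yields $w(S)\leq tw^-(S)/(1-\epsilon)$, and combining this with $w^+(u)\leq(1+\epsilon)w(u)/t$ gives
$$w^+(D)\leq(1+\epsilon)w(D)/t\leq(1+\epsilon)w(S)/t\leq\frac{1+\epsilon}{1-\epsilon}w^-(S).$$
An elementary calculation shows that $\frac{1+\epsilon}{1-\epsilon}\leq 1+3\epsilon$ whenever $\epsilon\leq 1/3$, which is guaranteed by condition (i) of Theorem \ref{theorem3}. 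Hence $w^+(D)\leq(1+3\epsilon)w^-(S)$ for every such $D$.

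I would then construct $S'$ greedily: initialize $S':=S$, and repeatedly add to $S'$ an arbitrary vertex of $V(T)\setminus S'$ adjacent to $S'$, stopping as soon as $w^-(S')\geq(1+3\epsilon)w^-(S)$; if this threshold is never reached, the procedure terminates with $S'=V(T)$. The set $S'$ stays connected throughout, and each added vertex $v$ satisfies $w^-(v)\leq w_{\max}/t$, so in the first case $w^-(S')\leq(1+3\epsilon)w^-(S)+w_{\max}/t$, while in the second case $w^-(S')<(1+3\epsilon)w^-(S)$ trivially. For safety, every component $D'$ of $T-S'$ adjacent to $S'$ is contained in some component $D$ of $T-S$ adjacent to $S$ (because $S\subseteq S'$ and $T[S']$ is connected), whence $w^+(D')\leq w^+(D)\leq(1+3\epsilon)w^-(S)\leq w^-(S')$; in the second case $T-S'$ is empty and safety is vacuous.

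I do not anticipate a serious obstacle. The only points requiring mild care are ensuring that the overshoot when the threshold is crossed is controlled by a single vertex's $w^-$-weight (which is why the extra $w_{\max}/t$ term appears in the statement), and uniformly handling the terminal case $S'=V(T)$. The hypothesis $\epsilon\leq 1/3$ is used precisely to justify $\frac{1+\epsilon}{1-\epsilon}\leq 1+3\epsilon$, explaining why this particular constant appears in the approximation guarantee of Theorem \ref{theorem3}.
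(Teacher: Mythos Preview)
Your proposal is correct and follows essentially the same route as the paper: first bound $w^+(D)\leq\frac{1+\epsilon}{1-\epsilon}\,w^-(S)\leq(1+3\epsilon)w^-(S)$ for every component $D$ of $T-S$, then greedily enlarge $S$ by adjacent vertices until the resulting set is $(w^-,w^+)$-safe, with the overshoot controlled by a single vertex's $w^-$-weight. The only cosmetic difference is that the paper phrases the stopping rule as ``add vertices as long as necessary to become $(w^-,w^+)$-safe'' rather than your explicit threshold $w^-(S')\geq(1+3\epsilon)w^-(S)$; since the key bound on $w^+(D)$ guarantees that safety is attained no later than when your threshold is crossed, the two stopping rules yield the same upper bound on $w^-(S')$.
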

\begin{proof}[Proof of Claim \ref{claim4}]
Let $D$ be a component of $T-S$ such that $w^+(D)$ is maximum. Since $w(S)\geq w(D)$, we obtain
\begin{eqnarray*}
w^+(D) & \leq & (1+\epsilon)\frac{w(D)}{t}
\leq (1+\epsilon)\frac{w(S)}{t}
\leq \left(\frac{1+\epsilon}{1-\epsilon}\right)w^-(S)\\
&\stackrel{\epsilon\leq 1/3}{\leq} &(1+3\epsilon)w^-(S).
\end{eqnarray*}
Therefore, iteratively adding to $S$ vertices from $V(T)\setminus S$ that have a neighbor in $S$
as long as necessary in order to obtain a connected $(w^-,w^+)$-safe set,
yields a connected $(w^-,w^+)$-safe set $S'$ with
$$w^-(S')\leq (1+3\epsilon)w^-(S)+\max\{ w^-(u):u\in V(T)\},$$
which implies the desired result.
\end{proof}
Applying Claim \ref{claim4} to the connected $w$-safe set $S_1$ 
yields a connected $(w^-,w^+)$-safe set $S_1'$ with 
\begin{eqnarray*}
w^-(S_1')
&\leq &(1+3\epsilon)w^-(S)+\frac{w_{\max}}{t}\\
&\leq &(1+3\epsilon)\frac{w(S_1)}{t}+\frac{Mw_{\min}}{t}\\
&\leq &(1+3\epsilon+M)\frac{w(S_1)}{t}\\
&=&(1+3\epsilon+M)\frac{n}{\epsilon^2}.
\end{eqnarray*}
Using Theorem \ref{theorem2},
a connected $(w^-,w^+)$-safe set $S_2$ in $T$ of minimum $w^-$-weight can be determined 
in time $O\left(\frac{(1+3\epsilon+M)^4n^5}{\epsilon^8}\right)$.
Let $S_{\rm opt}$ be a connected $w$-safe set in $T$ of minimum $w$-weight.
By Claim \ref{claim4},
there is a connected $(w^-,w^+)$-safe set $S_{\rm opt}'$ with 
$w^-(S_{\rm opt}')
\leq (1+3\epsilon)w^-(S_{\rm opt})+w_{\max}/t$.
Now, we obtain
\begin{eqnarray*}
w(S_2) 
& \leq & t\cdot w^-(S_2)+t\cdot n\\
& \leq & t\cdot w^-(S_{\rm opt}')+t\cdot n
\,\,\,\,\,\,\,\,\,\,\,\,\,\,\,\,\,\,\mbox{(optimality of $S_2$ w.r.t. $w^-$)}\\
& \leq & (1+3\epsilon)\cdot t\cdot w^-(S_{\rm opt})+w_{\max}+\epsilon^2 w(S_1)\\
& \leq & (1+3\epsilon)\cdot w(S_{\rm opt})+w_{\max}+2\epsilon^2 w(S_{\rm opt})\\
& = & (1+3\epsilon+2\epsilon^2)\cdot w(S_{\rm opt})+w_{\max},
\end{eqnarray*}
which completes the proof.
\end{proof}
A simple modification of the hardness proof in \cite{bafulemamasatu,bafulemamasatu2} 
allows to show our hardness result.

\begin{proof}[Proof of Theorem \ref{theoremhard}]
It follows easily from the standard hardness proofs (cf. Corollary 15.27 in \cite{kovy})
that {\sc Subset Sum} remains NP-complete for instances
$(c_1,\ldots,c_n,K)$ of positive integers 
with 
\begin{itemize}
\item $\max\{ c_i:i\in [n]\}<K<c_1+\ldots+c_n$, and
\item $2\min\{ c_i:i\in [n]\}\geq \max\{ c_i:i\in [n]\}$.
\end{itemize}
In order to ensure the crucial second condition, 
one can first proceed as in the proof of Corollary 15.27 in \cite{kovy},
and then increase each $c_j$ by $(n+1)^{3m+1}$ and
$K$ by $n\cdot (n+1)^{3m+1}$.

Let $(c_1,\ldots,c_n,K)$ be an instance of {\sc Subset Sum} satisfying the above conditions.
Let $T$ be a star of order $n+2$,
and let $w:V(T)\to \mathbb{N}$
be such that the center vertex receives weight $1$,
and the remaining vertices receive weights $c_1,\ldots,c_n,K+1$.
Since $\max\{ c_i:i\in [n]\}<K+1<c_1+\ldots+c_n+1$, every connected safe set in $T$ contains the center vertex.
Since some vertex in $T$ has weight $K+1$, we have $cs(T,w)\geq K+1$.
Since the center vertex together with the vertex of weight $K+1$ is a connected safe set, we have $cs(T,w)\leq K+2$.
Furthermore, $cs(T,w)=K+1$ if and only if 
there is a subset $S$ of $[n]$ such that $1+\sum_{i\in S}c_i=K+1$,
which happens if and only if $(c_1,\ldots,c_n,K)$ is a yes-instance of {\sc Subset Sum}.

If $u^*$ is the center vertex, then 
\begin{eqnarray*}
4\min\Big\{ w(u):u\in V(T)\setminus \{ u^*\}\Big\} & = & 4\min\{ c_i:i\in [n]\}\\
&\geq &\frac{2(c_1+\ldots+c_n)}{n}\\
&\geq &\frac{c_1+\ldots+c_n+K}{n}\\
&\geq & \frac{c_1+\ldots+c_n+K+2}{n+2}\\
&=&\frac{w(T)}{n(T)},
\end{eqnarray*}
which completes the proof.
\end{proof}
We proceed to the proof of our bound.

\begin{proof}[Proof of Theorem \ref{theorembound1}]
If $\lceil \omega/2\rceil\geq \lceil n/3\rceil+1$, 
then one block $B$ of $G$ is a clique of order $\omega\geq 2\lceil n/3\rceil+1$ 
that contains at most $n-n(B)\leq n/3-1\leq \lceil \omega/2\rceil$ cutvertices of $G$.
If $S$ is a subset of $V(B)$ of order $\lceil \omega/2\rceil$ 
that contains all cutvertices of $G$ in $B$,
then one component of $G-S$ is $B-S$, which has order $\lfloor \omega/2\rfloor$, while all remaining components of $G-S$ altogether have order $n-n(B)\leq \lceil \omega/2\rceil$,
that is, $S$ is a connected safe set, and the statement follows.
Now, we may assume that $\lceil \omega/2\rceil\leq \lceil n/3\rceil$.

Let $S$ be a set of $\lceil n/3\rceil$ vertices of $G$ chosen in such a way that a largest component $D$ of $G-S$ is smallest possible.
For a contradiction, suppose that $n(D)\geq \lceil n/3\rceil+1$.
Note that $n-|S|-n(D)\leq n/3-1<|S|<n(D)$, 
which implies that $D$ is the unique component of $G-S$ 
that contains more vertices than $S$.

First, suppose that $S$ is not contained in only one block of $G$.
Let $B$ be a block of $G$ that contains a vertex of $S$ and a vertex $u$ of $D$. If $v$ is a vertex in $S\setminus V(B)$ that is not a cutvertex of $G[S]$, $S'=(S\setminus \{ v\})\cup \{ u\}$, and $D'$ is a largest component of $G-S'$, then either $V(D')$ is a proper subset of $V(D)$, or $V(D')$ is a subset of the set $V(G)\setminus (S\setminus \{ v\}\cup V(D))$ of order at most $n-(\lceil n/3\rceil-1)-(\lceil n/3\rceil+1)\leq n/3$.
In both cases, $n(D')<n(D)$, contradicting the choice of $S$.
Note that the vertex $v$ can be chosen as a vertex in $S$ at maximum distance from $S\cap V(B)$.

Next, suppose that $S$ is contained in only one block $B$ of $G$.
If $D$ does not contain a vertex of $B$, then some cutvertex $x$ of $G$ in $S$ has a neighbor $u$ in $D$. If $v$ is a vertex in $S\setminus \{ x\}$, $S'=(S\setminus \{ v\})\cup \{ u\}$, and $D'$ is a largest component of $G-S'$, then 
either $V(D')$ is a proper subset of $V(D)$
or $V(D')$ is a subset of $V(G)\setminus (S\setminus \{ v\}\cup V(D))$
of order at most $n/3$.
Again, in both cases $n(D')<n(D)$, contradicting the choice of $S$.
If $D$ intersects $B$, then $|S|+n(D)>\omega\geq n(B)$ implies that $D$ contains a cutvertex $u$ of $G$ in $B$.
Since $n-|S|-n(D)<|S|$, 
the set $S$ contains a vertex $v$ of $B$ that is not a cutvertex of $G$.
If $S'=(S\setminus \{ v\})\cup \{ u\}$, 
and $D'$ is a largest component of $G-S'$, 
then $D'$ has strictly less vertices than $D$,
contradicting the choice of $S$.
\end{proof}
Fujita et al.~\cite{fumasa} show that $s(T)\geq n/(k+1)$ for every tree $T$ of order $n\geq 2$ with at most $k\geq 2$ leaves.
Adapting their proof easily implies that 
$s(G)\geq n/(k+1)$ for every block graph $G$ of order $n\geq 2$ with at most $k\geq 2$ endblocks.


\begin{thebibliography}{}
\bibitem{agcofulemamamonaotsatuxu} 
R. \'{A}gueda, N. Cohen, S. Fujita, S. Legay, Y. Manoussakis, Y. Matsui, L. Montero, R. Naserasr, H. Ono, Y. Otachi, T. Sakuma, Zs. Tuza, R. Xu, Safe Sets in Graphs:~Graph Classes and Structural Parameters, 
Journal of Combinatorial Optimization (2017) in press,
https://doi.org/10.1007/s10878-017-0205-2.
\bibitem{bafulemamasatu} R.B. Bapat, S. Fujita, S. Legay, Y. Manoussakis, Y. Matsui, T. Sakuma, Zs. Tuza, Network Majority on Tree Topological Network, Electronic Notes in Discrete Mathematics 54 (2016) 79-84.
\bibitem{bafulemamasatu2} R.B. Bapat, S. Fujita, S. Legay, Y. Manoussakis, Y. Matsui, T. Sakuma, Zs. Tuza, 
Safe sets, network majority on weighted trees,
Networks (2017) in press, https://doi.org/10.1002/net.21794.
\bibitem{fumasa} S. Fujita, G. MacGillivray, T. Sakuma, Safe set problem on graphs, Discrete Applied Mathematics 215 (2016) 106-111.
\bibitem{ibki} O.H. Ibarra, C.E. Kim, Fast approximation algorithms for the knapsack and sum of subset problem, Journal of the Association for Computing Machinery 22 (1975) 463-468.
\bibitem{kovy} B. Korte, J. Vygen, Combinatorial Optimization - Theory and Algorithms, 5th edition, Springer-Verlag Berlin Heidelberg 2012.
\end{thebibliography}
\end{document}